\newcommand{\A}{\mathcal{A}}
\newcommand{\D}{\mathcal{D}}
\newcommand{\G}{\mathcal{G}}
\newcommand{\Z}{\mathbb{Z}}
\newcommand{\N}{\mathcal{N}}
\newcommand{\C}{\mathcal{C}}
\newcommand{\QQ}{\mathcal{Q}}
\newcommand{\Q}{\mathbb{Q}}
\newcommand{\F}{\mathbb{F}}
\newcommand{\OO}{\mathcal{O}}
\newcommand{\M}{\mathcal{M}}
\newcommand{\Kb}{\overline{K}}
\newcommand{\ra}{\rightarrow}
\newcommand{\lra}{\longrightarrow}
\newcommand{\norm}{\mathrm{N}}
\newcommand{\etab}{\overline{\eta}}
\newcommand{\sigmab}{\overline{\sigma}}
\newcommand{\rhob}{\overline{\rho}}
\newcommand{\rhoh}{\widehat{\rho}}
\newcommand{\alphah}{\widehat{\alpha}}
\newcommand{\gammab}{\overline{\gamma}}
\DeclareMathOperator{\Gal}{Gal}
\DeclareMathOperator{\Aut}{Aut}
\newtheorem{theorem}{Theorem}
\newtheorem{lemma}[theorem]{Lemma}
\newtheorem{prop}[theorem]{Proposition}
\newtheorem{cor}[theorem]{Corollary}
\theoremstyle{definition}
\newtheorem{remark}[theorem]{Remark}
\newtheorem{definition}[theorem]{Definition}
\numberwithin{equation}{section}
\numberwithin{theorem}{section}
\title{Some nonabelian subgroups of the Nottingham group
over $\F_4$}
\author{Kevin Keating \\
Department of Mathematics \\
University of Florida \\
Gainesville, FL 32611 \\
USA \\[.2cm]
{\tt keating@ufl.edu}}
\begin{document}

\maketitle

\begin{abstract}
We classify the conjugacy classes of minimally ramified
nonabelian subgroups of order 8 in the Nottingham group
$\N(\F_4)$.  We then use finite automata to give
explicit descriptions of representatives for each of
these conjugacy classes.
\end{abstract}

\section{Introduction}

Let $k$ be a finite field of characteristic $p$ and set
$\N(k)={\{t+a_2t^2+a_3t^3+\dots:a_i\in k\}}$.  For
$\phi(t),\psi(t)\in\N(k)$ define 
$(\phi\circ\psi)(t)=\phi|_{t=\psi}$ to be the series
obtained from $\phi(t)$ by replacing $t$ with $\psi(t)$.
This substitution operation makes $\N(k)$ a group, known
as the Nottingham group of $k$.  It follows from a
theorem of Witt \cite{witt} that every finite $p$-group
is isomorphic to a subgroup of $\N(k)$.  Klopsch
\cite{klopsch} gave closed-form formulas for
representatives of every conjugacy class of elements of
order $p$ in $\N(k)$.  This leads to an explicit
classification of the conjugacy classes of cyclic
subgroups of $\N(k)$ with order $p$.  Finding formulas
for elements of more complicated finite subgroups of
$\N(k)$ has proven to be difficult.  For instance, all
the known examples of formulas for elements of $\N(k)$
of order $p^d$ with $d\ge2$ have $p=2$ and $d=2$.

     It was proposed in \cite{BCT} that elements of
finite order in $\N(k)$ should be described using finite
automata.  The basis for this approach is Christol's
theorem \cite{Chr}, which says that
$\sum_{n=0}^{\infty}a_nt^n\in k[[t]]$ is algebraic over
the rational function field $k(t)$ if and only if the
coefficient sequence $(a_n)_{n\ge0}$ is the output of a
finite automaton of an appropriate type.  To construct a
subgroup $\G$ of $\N(k)$ which is isomorphic to a given
$p$-group $G$, we start with a a totally ramified
$G$-extension $L/k((u))$.  Then $L\cong k((t))$, so for
each $\sigma\in\Gal(L/k((u)))$, $\sigma(t)$ is an
element of $\N(k)$.  In order to describe $\sigma(t)$
explicitly we choose $L$ to be a $G$-extension of
$k((u))$ which is generated by elements which are
algebraic over $k(u)$.  We then express $t$ and
$\sigma(t)$ as rational functions of these generators.
This allows us to compute a polynomial $f(t,X)$ with
coefficients in $k$ which has $X=\sigma(t)$ as a root.
Applying an explicit version of Christol's theorem gives
a finite automaton which generates the coefficients of
$\sigma(t)$.

     In \cite{BCT} the authors gave explicit
constructions of finite automata which determine
generators for some finite abelian subgroups of
$\N(\F_2)$.  In Section~8.3 of that paper they suggested
that it would be interesting to find automata for
generators of finite nonabelian subgroups of
$\N(\F_{2^m})$ as well.  In this paper we construct
finite automata which determine representatives for the
conjugacy classes of subgroups of $\N(\F_4)$ which are
isomorphic to the quaternion group $Q_8$ and have lower
ramification breaks $1,1,3$.  These are the smallest
possible lower breaks for a $Q_8$-subgroup of
$\N(\F_4)$.  We also construct finite automata which
correspond to representatives for the conjugacy classes
of subgroups of $\N(\F_4)$ which are isomorphic to the
dihedral group $D_4$ and have lower ramification breaks
$1,1,5$.  These are the smallest possible lower breaks
for a $D_4$-subgroup of $\N(\F_4)$.

\section{Finite automata}

Let $k\cong\F_q$ be a finite field.  We wish to describe
certain elements of $\N(k)$ using finite automata.  Our
automata have input alphabet $\{0,1,2,\dots,q-1\}$ and
output alphabet $k$.  An automaton is a directed
multigraph with loops such that every vertex has
outdegree $q$.  The edges emanating from each vertex are
labeled with the elements of the input alphabet
$\{0,1,2,\dots,q-1\}$, with each label appearing exactly
once.  The vertices are labeled with elements of the
output alphabet $k$, with the restriction that if an
edge with label 0 connects $v_1$ to $v_2$ then the
vertices $v_1,v_2$ must have the same label.  In
addition, one of the vertices is marked Start.  In what
follows, we often refer to the vertices of the automaton
as ``states''.

   An automaton can be used to generate an ``automatic
sequence'' as follows: Let $n\ge0$ and write $n$ in base
$q$ as $d_rd_{r-1}\dots d_1d_0$.  Beginning at the Start
vertex, trace the path though the digraph given by the
edges labeled $d_0,d_1,\dots,d_{r-1},d_r$.  The $n$th
term in our automatic sequence is the label attached to
the final vertex in this path.  Thanks to the
restriction on the vertex labeling, adding leading 0s to
our base-$q$ representation of $n$ doesn't change the
output.  Christol's theorem \cite{Chr} says that the
series $\sum_{n=0}^{\infty}a_nt^n\in k[[t]]$ is
algebraic over $k(t)$ if and only if $(a_n)_{n\ge0}$ is
the output of an automaton of the type described here.

\section{Ramification theory}

In this section we collect definitions and facts about
higher ramification theory.  See Chapter~IV of \cite{cl}
for more information.  As above, we assume that
$k\cong\F_q$ is a finite field of characteristic $p$.

     Let $\A(k)=\{a_1t+a_2t^2+\cdots\in k[[t]]:
a_1\not=0\}$.  Then $\A(k)$ with the operation of
substitution forms a group which fits into an exact
sequence
\[1\lra\N(k)\lra\A(k)\lra k^{\times}\lra1.\]
Let $E$ be a local field of characteristic $p$ with
residue field $k$ and let $\pi_E$ be a uniformizer for
$E$.  Let $\sigma\in\Aut_k(E)$.  Then there is a
uniquely determined $\phi_{\sigma}(t)\in\A(k)$ such that
$\sigma(\pi_E)=\phi_{\sigma}|_{t=\pi_E}$.  Since
$\sigma$ is necessarily continuous, $\sigma(\pi_E)$
determines $\sigma$.  Therefore the map
$\theta_{\pi_E}:\Aut_k(E)\ra\A(k)$ defined by
$\theta_{\pi_E}(\sigma)=\phi_{\sigma}$ is an
anti-isomorphism.  Let $\pi_E'$ be another uniformizer
for $E$; then there is a uniquely determined
$\lambda(t)\in\A(k)$ such that
$\pi_E'=\lambda|_{t=\pi_E}$.  For $\sigma\in\Aut_k(E)$
we get $\theta_{\pi_E'}(\sigma)
=\lambda\circ\theta_{\pi_E}(\sigma)\circ\lambda^{-1}$.

     Define the depth of $\phi(t)\in\A(k)$ to be
$D(\phi)=v_t(\phi(t)-t)-1$.  For $d\ge1$ set
\[\A(k)_d=\{\phi\in\A(k):D(\phi)\ge d\}.\]
Then $\A(k)_d$ is a normal subgroup of $\A(k)$; in
particular, we have $\N(k)=\A(k)_1$.  Let $\G$ be a finite
subgroup of $\A(k)$, and for $d\ge0$ set
\[\G_d=\{\phi\in\G:D(\phi)\ge d\}=\G\cap\A(k)_d.\] 
Then $\G_d$ is a normal subgroup of $\G$ which we call
the $d$th ramification subgroup of $\G$.  Say that
$b\ge0$ is a lower ramification break of $\G$ if
$\G_b\ne\G_{b+1}$.  If $b\ge1$ and $|\G_b:\G_{b+1}|=p^m$
we say that $b$ is a lower ramification break with
multiplicity $m$.  Suppose $|\G|=p^n$; then $\G$ has $n$
lower ramification breaks $b_1\le b_2\le\dots\le b_n$,
counted with multiplicities.  In some cases it is more
convenient to work with the upper ramification breaks of
$\G$ rather than the lower breaks.  In the case where
$|\G|=p^n$ these are defined by $u_1=b_1$ and
$u_{i+1}-u_i=p^{-i}(b_{i+1}-b_i)$ for $1\le i\le n-1$.
Note that while the lower ramification breaks of $\G$
are always integers, the upper breaks need not be.

     Let $F$ be a subfield of $E$ such that $E/F$ is a
finite totally ramified Galois extension with Galois
group $G=\Gal(E/F)$.  Set $\G=\theta_{\pi_E}(G)$, where
$\theta_{\pi_E}:\Aut_k(E)\ra\A(k)$ is the
anti-isomorphism determined by $\pi_E$.  Then the
ramification filtration on $\G$ induces a ramification
filtration $(G_b)_{b\ge0}$ on $G$.  Since
$\A(k)_d\trianglelefteq\A(k)$, the ramification
filtration on $G$ does not depend on the choice of
$\pi_E$.  We define lower and upper ramification breaks
for $G$ just as we did for $\G$; we also say that these
are the lower and upper ramification breaks of the
extension $E/F$.  Of course, these are the same as the
ramification breaks of $\G$.  These definitions agree
with the definitions of ramification jumps found, for
instance, in Chapter~IV of \cite{cl}.

     The following facts will be useful in
Sections~\ref{Qext} and \ref{Dext}:

\begin{prop} \label{breakfacts}
Let $E/F$ be a totally ramified Galois extension of
degree $p^n$ with lower ramification breaks
$b_1\le\dots\le b_n$ and upper ramification breaks
$u_1\le\dots\le u_n$.  Let $D/F$ be a subextension of
$E/F$.  Then
\begin{enumerate}[(a)]
\item The lower ramification breaks of $E/D$ are also
lower breaks of $E/F$.
\item If $D/F$ is Galois then the upper ramification
breaks of $D/F$ are also upper breaks of $E/F$.
\item Suppose $D$ is the fixed field of the ramification
subgroup $G_{b_i}$, where $i\ge2$ and $b_{i-1}<b_i$.
Then the lower breaks of $D/F$ are
$b_1\le\dots\le b_{i-1}$ and the upper breaks are
$u_1\le\dots\le u_{i-1}$.
\end{enumerate}
\end{prop}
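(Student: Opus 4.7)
The plan is to invoke standard compatibilities of the ramification filtration under subgroups and quotients, which are stated in Chapter~IV of \cite{cl}.

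For part (a), I would use that the lower numbering is well behaved with respect to subgroups: for any subgroup $H\le G$, one has $H_b = H\cap G_b$. This follows directly from the definition of depth because the uniformizer $\pi_E$ of $E$ is fixed once and for all, so the formula $D(\phi)=v_t(\phi-t)-1$ does not care whether $\phi$ comes from $H$ or from $G$. Setting $H=\Gal(E/D)$, if $b$ is a lower break of $E/D$ then $H\cap G_b\ne H\cap G_{b+1}$, which forces $G_b\ne G_{b+1}$, so $b$ is a lower break of $E/F$.

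For part (b), I would apply Herbrand's theorem for the normal subgroup $N=\Gal(E/D)$: the upper numbering is compatible with quotients in the sense that $(G/N)^u = G^uN/N$. If $u$ is an upper break of $G/N$, then $(G/N)^u\ne (G/N)^{u+\epsilon}$ for all small $\epsilon>0$, which forces $G^u\ne G^{u+\epsilon}$, so $u$ is an upper break of $E/F$.

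For part (c), I would first note that $N:=G_{b_i}$ is normal in $G$ (all lower ramification subgroups are normal), so $D/F$ is Galois with group $G/N$, and that $N$ equals the upper ramification subgroup $G^{u_i}$ by the correspondence between lower and upper numbering. Because $b_{i-1}<b_i$, the upper filtration satisfies $G^u\supsetneq N$ for $u\le u_{i-1}$ and $G^u\subseteq N$ for $u>u_{i-1}$. Plugging these into $(G/N)^u = G^uN/N$ shows that the filtration on $G/N$ agrees with $G^u/N$ up to level $u_{i-1}$ and is trivial beyond, so the upper breaks of $D/F$ are exactly $u_1\le\dots\le u_{i-1}$. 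To obtain the lower breaks of $D/F$, I would apply the recursion $u_1=b_1$, $u_{j+1}-u_j = p^{-j}(b_{j+1}-b_j)$ inside $G/N$ (a group of order $p^{i-1}$): the recursion coincides with that of $G$ in indices $j\le i-1$ and the upper breaks coincide, so an induction on $j$ gives that the lower breaks of $D/F$ are $b_1\le\dots\le b_{i-1}$.

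The main obstacle is pinning down the ranges of $u$ on which $G^u\supseteq N$ holds, since this uses the convention that $N=G^{u_i}$ (and not $G^{u_i+\varepsilon}$) together with the step-function behavior of the upper numbering across a genuine gap $b_{i-1}<b_i$. Once that is set up correctly, the rest is bookkeeping with the Herbrand transition.
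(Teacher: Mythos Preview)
Your proposal is correct and takes essentially the same approach as the paper: both invoke the standard compatibilities of the ramification filtration with subgroups and quotients from Chapter~IV of \cite{cl} (the paper cites Proposition~14 for (b) and Lemma~5 for (c)). The only cosmetic difference is that in part~(c) the paper first obtains the lower breaks of $D/F$ directly and then deduces the upper breaks from the recursion, whereas you reverse the order---both work equally well.
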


\begin{proof}
Statement (a) follows directly from the definitions.
Statement (b) is a consequence of Proposition~14 of
\cite[IV]{cl}.  The first part of (c) follows from
Lemma~5 of \cite[IV]{cl}.  The second part then follows
from the definition of the upper breaks.
\end{proof}

\section{Field extensions and subgroups of the
Nottingham group}

In this section we formalize an idea that was used in
\cite{klopsch}, \cite{lubtor}, and \cite{BCT}.  Once again
$k\cong\F_q$ is a finite field of characteristic $p$.

     Let $E$ be a local field of characteristic $p$ with
residue field $k$.  Say $\sigma\in\Aut_k(E)$ is a wild
automorphism of $E$ if
$\sigma(\pi_E+\M_E^2)=\pi_E+\M_E^2$; this does not
depend on the choice of uniformizer $\pi_E$ for $E$.
The wild automorphisms of $E$ form a subgroup
$\Aut_k^1(E)$ of $\Aut_k(E)$ such that
$\theta_{\pi_E}(\Aut_k^1(E))=\N(k)$.  Suppose $E/F$ is a
finite totally ramified Galois extension of degree
$p^n$.  Then $\G=\theta_{\pi_E}(\Gal(E/F))$ is a
subgroup of the pro-$p$ group $\N(k)$, so
$\Gal(E/F)\le\Aut_k^1(E)$.

     Motivated in part by the notion of wild
automorphisms we make the following definition:
\begin{definition}
Let $k$ be a finite field of characteristic $p$.  We
define a category $\C_k$ whose objects are pairs
$(E/F,\pi_F+\M_F^2)$, where
\begin{enumerate}[(i)]
\item $F$ is a local field of characteristic $p$ with
residue field $k$,
\item $E/F$ is a finite totally ramified Galois
$p$-extension,
\item $\pi_F+\M_F^2$ is a generator for the
$\OO_F$-module $\M_F/\M_F^2$.
\end{enumerate}
A $\C_k$-morphism
\begin{equation} \label{morph}
\gamma:(E_1/F_1,\pi_{F_1}+\M_{F_1}^2)\lra
(E_2/F_2,\pi_{F_2}+\M_{F_2}^2)
\end{equation}
is a $k$-algebra isomorphism $\gamma:E_1\ra E_2$ such
that $\gamma(F_1)=F_2$ and
$\gamma(\pi_{F_1})+\M_{F_2}^2=\pi_{F_2}+\M_{F_2}^2$.
\end{definition}
\medskip

     It follows from the definition that every
$\C_k$-morphism is an isomorphism.  Let
$\gamma$ be the $\C_k$-morphism in
(\ref{morph}).  There is an isomorphism from
$\Gal(E_1/F_1)$ to $\Gal(E_2/F_2)$ which maps
$\sigma\in\Gal(E_1/F_1)$ to
$\gamma\circ\sigma\circ\gamma^{-1}$.  Furthermore, this
isomorphism respects the ramification filtrations of the
Galois groups.  Hence the ramification breaks of
$E_1/F_1$ and $E_2/F_2$ are the same.

     Let $E$ be a local field of characteristic $p$ with
residue field $k$ and let $\pi_E$ be a uniformizer for
$E$.  Let $\C^{E,\pi_E}$ denote the full subcategory of
$\C_k$ whose objects have the form
$(E/F,\pi_F+\M_F^2)$, where $E/F$ is a finite
totally ramified Galois $p$-extension
and $\pi_F=\norm_{E/F}(\pi_E)$.  A morphism
from $(E/F_1,\pi_{F_1}+\M_{F_1}^2)$ to
$(E/F_2,\pi_{F_2}+\M_{F_2}^2)$ is a wild
automorphism of $E$ which maps $F_1$ onto $F_2$.

     Let $K$ be a local field of characteristic $p$ with
residue field $k$ and let $\pi_K$ be a uniformizer for
$K$.  Let $\C_{K,\pi_K}$ denote the full subcategory of
$\C_k$ whose objects have the form
$(L/K,\pi_K+\M_K^2)$, where $L/K$ is a finite
totally ramified Galois $p$-extension.
A morphism from $(L_1/K,\pi_K+\M_K^2)$
to $(L_2/K,\pi_K+\M_K^2)$ is an isomorphism from
$L_1$ to $L_2$ which induces a wild automorphism of $K$.

\begin{prop} \label{equiv}
Let $K$ and $E$ be local fields of characteristic $p$
with residue field $k$.  Let $\pi_K$ be a uniformizer
for $K$ and let $\pi_E$ be a uniformizer for $E$.  Then
there are one-to-one correspondences between the
following:
\begin{enumerate}[(i)]
\item Isomorphism classes of objects in $\C_k$.
\item Isomorphism classes of objects in $\C_{K,\pi_K}$.
\item Isomorphism classes of objects in $\C^{E,\pi_E}$.
\item Conjugacy classes of finite subgroups of $\N(k)$.
\end{enumerate}
These correspondences map the isomorphism class of
$(L/K,\pi_K+\M_K^2)\in\C_{K,\pi_K}$ to the
conjugacy class represented by the subgroup
$\G=\theta_{\pi_L}(\Gal(L/K))$ of $\N(k)$, where $\pi_L$
is any uniformizer of $L$ such that
$\norm_{L/K}(\pi_L)+\M_K^2=\pi_K+\M_K^2$.
\end{prop}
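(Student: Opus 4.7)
The plan is to deduce the four-way bijection from three partial equivalences: (i)$\leftrightarrow$(ii), (i)$\leftrightarrow$(iii), and (ii)$\leftrightarrow$(iv).

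For (i)$\leftrightarrow$(ii), I would first show that every isomorphism class in $\C_k$ has a representative in $\C_{K,\pi_K}$: given $(E/F,\pi_F+\M_F^2)\in\C_k$, a continuous $k$-algebra isomorphism $F\to K$ sending $\pi_F$ to $\pi_K$ exists because both fields are isomorphic to $k((u))$ once a uniformizer is chosen, and transporting $E/F$ along it yields an object of $\C_{K,\pi_K}$ together with a $\C_k$-isomorphism from the original.  The equivalence of $\C_k$- and $\C_{K,\pi_K}$-isomorphism on objects of $\C_{K,\pi_K}$ then follows because the restriction to $K$ of any such $\C_k$-morphism fixes $\pi_K+\M_K^2$ and is therefore wild by definition.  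The bijection (i)$\leftrightarrow$(iii) is proved the same way, with one extra step: starting from any $k$-algebra isomorphism $\beta:E_1\to E$, one post-composes with an automorphism of $E$ that scales $\pi_E$ by a suitable $c\in k^{\times}$ so that the composite carries $\pi_{F_1}+\M_{F_1}^2$ to $\norm_{E/F}(\pi_E)+\M_F^2$.  Such a $c$ exists because rescaling $\pi_E$ by $c$ rescales the norm by $c^{p^n}$, and $c\mapsto c^{p^n}$ is a bijection on $k^{\times}$.  A leading-coefficient calculation of the same flavor shows that any $\C_k$-morphism between $\C^{E,\pi_E}$-objects is automatically a wild automorphism of $E$.

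For (ii)$\leftrightarrow$(iv) I would proceed in four substeps.  (a) Existence of a uniformizer $\pi_L$ with $\norm_{L/K}(\pi_L)\equiv\pi_K\pmod{\M_K^2}$: rescale any uniformizer by the solution of $c^{p^n}c_0=1$, where $c_0$ is the initial leading-coefficient discrepancy.  (b) Well-definedness up to $\N(k)$-conjugacy: two such uniformizers differ by some $\pi_L'=\lambda|_{t=\pi_L}$ with $\lambda\in\A(k)$, and comparing norms mod $\M_K^2$ forces the leading coefficient of $\lambda$ to be $1$, so $\lambda\in\N(k)$ and $\theta_{\pi_L'}(\Gal(L/K))=\lambda\,\theta_{\pi_L}(\Gal(L/K))\,\lambda^{-1}$.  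Independence of the isomorphism class follows because for a $\C_{K,\pi_K}$-isomorphism $\gamma:L_1\to L_2$ the image $\gamma(\pi_{L_1})$ serves as a valid $\pi_{L_2}$ (its norm is $\gamma(\norm_{L_1/K}(\pi_{L_1}))\equiv\pi_K\pmod{\M_K^2}$, using that $\gamma|_K$ is wild), and $\theta_{\gamma(\pi_{L_1})}(\gamma\sigma\gamma^{-1})=\theta_{\pi_{L_1}}(\sigma)$ for each $\sigma\in\Gal(L_1/K)$.  (c) Surjectivity: given a finite $\G\le\N(k)$, let $\G$ act on $L_0=k((t))$ by $\phi\cdot f(t)=f(\phi(t))$, set $K_0=L_0^{\G}$ and $\pi_{K_0}=\norm_{L_0/K_0}(t)$, and transport along the isomorphism $K_0\to K$ sending $\pi_{K_0}$ to $\pi_K$; the resulting object has associated subgroup $\G$.  (d) Injectivity: if $\G_1=\lambda\G_2\lambda^{-1}$ with $\lambda\in\N(k)$, put $\pi_{L_2}'=\lambda|_{t=\pi_{L_2}}$ so that $\theta_{\pi_{L_2}'}(\Gal(L_2/K))=\G_1=\theta_{\pi_{L_1}}(\Gal(L_1/K))$, and define $\gamma:L_1\to L_2$ as the continuous $k$-algebra isomorphism with $\gamma(\pi_{L_1})=\pi_{L_2}'$; then $\gamma$ conjugates $\Gal(L_1/K)$ onto $\Gal(L_2/K)$, forcing $\gamma(K)=K$, and the norm calculation shows $\gamma|_K$ is wild.

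The main obstacle will be keeping the norm bookkeeping straight and invoking $\lambda\in\N(k)$ rather than general $\lambda\in\A(k)$ at just the right moments.  The recurring lemma underlying the entire correspondence is that a change of uniformizer $\pi_L'=\lambda|_{t=\pi_L}$ preserves the class $\norm_{L/K}(\pi_L)+\M_K^2$ if and only if $\lambda\in\N(k)$, which in turn rests on the characteristic-$p$ fact that $c\mapsto c^{p^n}$ is a bijection on $k^{\times}$.  Once this lemma is isolated, each substep above reduces to a short calculation.
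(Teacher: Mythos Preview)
Your argument is correct, and your treatment of (i)$\leftrightarrow$(ii) and (i)$\leftrightarrow$(iii) is essentially the paper's (the paper simply asserts that every $\C_k$-object is isomorphic to one in each full subcategory, whereas you spell out the leading-coefficient adjustment).  The genuine difference is in how you reach (iv): the paper links (iv) to (iii) rather than to (ii).  In $\C^{E,\pi_E}$ the top field is held fixed, so an object is just a subfield $F\subset E$ (with $\pi_F=\norm_{E/F}(\pi_E)$), a morphism is a wild automorphism of $E$, and hence isomorphism classes in (iii) are literally the orbits of finite subgroups of $\Aut_k^1(E)$ under conjugation; pushing through the anti-isomorphism $\theta_{\pi_E}$ gives (iv) in one line.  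Your route via (ii) is longer because the top field varies, forcing you to manufacture a good uniformizer $\pi_L$, track its norm, and then run separate well-definedness, surjectivity, and injectivity checks.  On the other hand, your approach has the advantage of verifying directly the explicit description of the correspondence stated in the proposition (the map $(L/K,\pi_K+\M_K^2)\mapsto\theta_{\pi_L}(\Gal(L/K))$), which the paper's proof leaves to be extracted from the chain (ii)$\to$(i)$\to$(iii)$\to$(iv).  The ``recurring lemma'' you isolate---that $\lambda\in\N(k)$ iff substituting $\lambda$ preserves $\norm_{L/K}(\pi_L)+\M_K^2$---is exactly the computation hidden inside the paper's one-line claim that a $\C_k$-isomorphism between $\C^{E,\pi_E}$-objects is the same thing as a wild automorphism of $E$ carrying one base field to the other.
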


\begin{proof}
It's clear that every object in $\C_k$ is isomorphic
to an object in $\C_{K,\pi_K}$, and to an object in
$\C^{E,\pi_E}$.  Hence the inclusions of $\C_{K,\pi_K}$
and $\C^{E,\pi_E}$ into $\C_k$ are equivalences of
categories.  This gives the one-to-one correspondences
between (i), (ii), and (iii).  Let $F_1,F_2$ be
subfields of $E$ such that $E/F_1$ and $E/F_2$ are
totally ramified Galois $p$-extensions.  Then
$(E/F_1,\pi_{F_1}+\M_{F_1}^2)$ is
$\C_k$-isomorphic to
$(E/F_2,\pi_{F_2}+\M_{F_2}^2)$ if and only if
there is a wild automorphism $\gamma$ of $E$ such that
$\gamma(F_1)=F_2$.  This is equivalent to
$\gamma\circ\Gal(E/F_1)\circ\gamma^{-1}=\Gal(E/F_2)$, so
the isomorphism classes in (iii) are in one-to-one
correspondence with conjugacy classes of finite
subgroups of $\Aut_k^1(E)$.  Since
$\theta_{\pi_E}:\Aut_k^1(E)\ra\N(k)$ is an
anti-isomorphism, this gives a one-to-one correspondence
between the isomorphism classes in (iii) and conjugacy
classes of finite subgroups of $\N(k)$.
\end{proof}

     Let $K$ be a local field of characteristic $p$ with
residue field $k$, let $\pi_K$ be a uniformizer for $K$,
and let $G$ be a finite $p$-group.  It follows from
Proposition~\ref{equiv} that conjugacy classes of
subgroups of $\N(k)$ which are isomorphic to $G$ are in
one-to-one correspondence with $\C_k$-isomorphism classes
of objects $(L/K,\pi_K+\M_K^2)$ such that $L/K$ is a
totally ramified $G$-extension.  The $G$-extensions
$L/K$ are relatively well-understood, since they can be
constructed iteratively using Artin-Schreier extensions.

\section{Elementary abelian extensions}
\label{elem}

Let $k=\F_4$ be the finite field with 4 elements and let
$s\in\F_4\smallsetminus\F_2$.  Then $s^2+s+1=0$ and
$\F_4=\{0,1,s,s^2\}$.  Let $K$ be a local field of
characteristic 2 with residue field $\F_4$.  In this
section we use Artin-Schreier theory to construct
totally ramified $(C_2\times C_2)$-extensions $M/K$ with
lower ramification breaks 1,1.  In Sections~\ref{Qext}
and \ref{Dext} we will use these extensions to construct
Galois extensions $L/K$ with Galois groups $Q_8$ and
$D_4$.

     In this section all fields are assumed to be
contained in a fixed algebraic closure $\Kb$ of $K$.
For each finite extension $L/K$ we let
$v_L:\Kb\ra\Q\cup\{\infty\}$ denote the valuation
normalized so that $v_L(L^{\times})=\Z$.  Since we are
working in characteristic 2, the Artin-Schreier operator
is given by $\wp(x)=x^p-x=x^2-x$.  

     The following elementary facts will be useful:

\begin{lemma} \label{ASval}
Let $L$ be a finite extension of $K$ and let
$\eta\in\Q$ with $\eta>0$.  Then for $a\in\Kb$ we have
$v_L(a)=-\eta$ if and only if $v_L(\wp(a))=-2\eta$.
\end{lemma}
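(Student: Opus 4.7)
The plan is to exploit the standard ultrametric principle that when two terms in a sum have distinct valuations, the valuation of the sum is the minimum of the two. The key observation is that when $v_L(a) < 0$, the two summands $a^2$ and $-a$ in $\wp(a) = a^2 - a$ have strictly different valuations, so there is no cancellation.

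For the forward direction, assume $v_L(a) = -\eta < 0$. Then $v_L(a^2) = -2\eta$ while $v_L(-a) = v_L(a) = -\eta$. Since $-2\eta < -\eta$, the ultrametric inequality gives $v_L(\wp(a)) = v_L(a^2 - a) = -2\eta$, as required.

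For the reverse direction, suppose $v_L(\wp(a)) = -2\eta < 0$. I would argue by cases on $v_L(a)$. If $v_L(a) \geq 0$, then both $a^2$ and $a$ lie in $\mathcal{O}_L$, forcing $v_L(\wp(a)) \geq 0$, which contradicts $-2\eta < 0$. So $v_L(a) < 0$; write $v_L(a) = -\mu$ with $\mu > 0$. Then the forward direction (applied to $\mu$ in place of $\eta$) shows $v_L(\wp(a)) = -2\mu$, whence $-2\mu = -2\eta$ and so $\mu = \eta$.

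There is no real obstacle here; the only subtlety is the hypothesis $\eta > 0$, which is exactly what is needed to guarantee the non-cancellation in $a^2 - a$. (In characteristic 2 and with $\eta = 0$, one could have $a \in \mathcal{O}_L^\times$ with $\wp(a) \in \mathcal{O}_L$ of arbitrary nonnegative valuation, so the hypothesis $\eta > 0$ cannot be dropped.)
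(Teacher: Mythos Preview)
Your argument is correct and is precisely the standard one: use the strict ultrametric inequality on $a^2-a$ when $v_L(a)<0$, and for the converse rule out $v_L(a)\ge0$ and then invoke the forward direction. The paper itself states this lemma as an elementary fact without proof, so there is nothing to compare against; your write-up would serve perfectly well as the omitted justification.
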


\begin{lemma} \label{ASbreak}
Let $b$ be a positive integer such that $2\nmid b$ and
let $r\in K$ satisfy $v_K(r)=-b$.  Let $\alpha$ be a
root of $X^2-X-r$.  Then $K(\alpha)/K$ is a totally
ramified $C_2$-extension with upper and lower
ramification break $b$.  Furthermore, for every totally
ramified $C_2$-extension $L/K$ with ramification break
$b$ there is $r\in K$ with $v_K(r)=-b$ such that $L$ is
generated over $K$ by a root of $X^2-X-r$.
\end{lemma}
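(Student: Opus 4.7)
The plan is to prove the two assertions separately.

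For the first, let $L=K(\alpha)$ and $e=e(L/K)$. Applying Lemma~\ref{ASval} to $\wp(\alpha)=r$ gives $2v_L(\alpha)=v_L(r)=-be$; since $b$ is odd, $e$ must be even, so $[L:K]=e=2$, $L/K$ is totally ramified, and $v_L(\alpha)=-b$. Because $\gcd(b,2)=1$, the element $\pi_L:=\alpha\pi_K^{(b+1)/2}$ has $v_L(\pi_L)=1$ and is therefore a uniformizer of $L$. Let $\tau$ be the nontrivial element of $\Gal(L/K)$; Artin-Schreier theory gives $\tau(\alpha)=\alpha+1$, so
\[\tau(\pi_L)-\pi_L=(\tau(\alpha)-\alpha)\,\pi_K^{(b+1)/2}=\pi_K^{(b+1)/2},\]
which has $v_L$-valuation $b+1$. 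Unwinding the definition of depth, this says $D(\theta_{\pi_L}(\tau))=b$, so $b$ is the unique lower ramification break of $L/K$. Since $|\Gal(L/K)|=p$, the upper break coincides with it.

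For the second assertion, let $L/K$ be a totally ramified $C_2$-extension with ramification break $b$. By Artin-Schreier theory, $L=K(\beta)$ with $\wp(\beta)=r_0\in K$, and we are free to replace $(\beta,r_0)$ by $(\beta+s,r_0+\wp(s))$ for any $s\in K$. Since $L/K$ is ramified we must have $v_K(r_0)<0$: otherwise $X^2-X-r_0$ reduces to a polynomial over $k$ whose splitting field over $K$ is at most unramified of degree~$\leq 2$. Suppose $v_K(r_0)=-m$ with $m$ even, and write the leading term as $c\pi_K^{-m}$; since $k$ is perfect we can pick $c_0\in k$ with $c_0^2=c$, and subtracting $\wp(c_0\pi_K^{-m/2})$ from $r_0$ strictly raises the valuation. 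Iterating this reduction terminates with some $r\in K$, Artin-Schreier equivalent to $r_0$, having $v_K(r)$ negative and odd. The first assertion applied to $r$ shows that the ramification break of $K(\alpha)/K$ equals $-v_K(r)$, so $v_K(r)=-b$ as required.

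The main obstacle is the break computation in the first assertion: without a uniformizer in hand one cannot directly read off $D(\theta_{\pi_L}(\tau))$, and $\alpha$ itself has negative valuation. Once one spots that $\pi_L=\alpha\pi_K^{(b+1)/2}$ is a uniformizer, the computation of $\tau(\pi_L)-\pi_L$ becomes transparent; the converse is then a standard Artin-Schreier reduction argument that relies on perfectness of $k$ to kill even-valuation leading terms.
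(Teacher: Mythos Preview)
Your proof is correct. The paper states Lemma~\ref{ASbreak} (together with Lemma~\ref{ASval}) as one of several ``elementary facts'' and gives no proof, so there is nothing to compare against; the argument you give---producing the explicit uniformizer $\pi_L=\alpha\,\pi_K^{(b+1)/2}$ to read off $v_L(\tau(\pi_L)-\pi_L)=b+1$, and for the converse performing the standard Artin-Schreier reduction to a generator of odd negative valuation---is exactly the expected one. The only point you might make more explicit in the second part is that the iterated reduction cannot terminate with $v_K(r)\ge0$: if it did, your earlier observation would force $L/K$ to be unramified, contradicting the hypothesis; hence the process must stop at a negative odd valuation.
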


\begin{lemma} \label{2diff}
Let $a,b\in K$ and let $\alpha,\beta\in \Kb$ satisfy
$\alpha^2-\alpha=a$, $\beta^2-\beta=b$.  If
$K(\alpha)=K(\beta)$ then $a-b\in\wp(K)$.
\end{lemma}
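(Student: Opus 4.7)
The plan is to prove this by a direct computation, distinguishing the cases according to whether $\alpha$ lies in $K$ or generates a proper degree-$2$ extension. First I would dispose of the trivial case: if $\alpha\in K$ then $K(\alpha)=K$, and by hypothesis $K(\beta)=K$ as well, so $\beta\in K$. Then $a=\wp(\alpha)\in\wp(K)$ and $b=\wp(\beta)\in\wp(K)$, hence $a-b\in\wp(K)$.

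So assume $\alpha\notin K$. Then $[K(\alpha):K]=2$, since $\alpha$ satisfies the polynomial $X^2-X-a$ over $K$. Because $K(\beta)=K(\alpha)$, we may write $\beta=c+d\alpha$ for some $c,d\in K$. The key step is to compute $\wp(\beta)$ explicitly using $\alpha^2=\alpha+a$ and the fact that we are in characteristic $2$:
\[
\beta^2-\beta=(c+d\alpha)^2-(c+d\alpha)=c^2+d^2\alpha^2-c-d\alpha=(c^2-c+d^2 a)+(d^2-d)\alpha.
\]
Since $\beta^2-\beta=b\in K$ and $\{1,\alpha\}$ is a $K$-basis for $K(\alpha)$, comparing coefficients of $\alpha$ forces $d^2-d=0$, i.e.\ $d\in\F_2\subseteq K$.

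If $d=0$ then $\beta=c\in K$, contradicting $K(\beta)=K(\alpha)\ne K$. Hence $d=1$, and the constant-term comparison gives $b=c^2-c+a$, i.e.\ $a-b=c-c^2=\wp(c)\in\wp(K)$ (recalling that characteristic $2$ makes signs irrelevant). This completes the argument. No step here is a real obstacle; the only point requiring care is the computation of $\beta^2-\beta$ in characteristic $2$ and the use of linear independence of $\{1,\alpha\}$ over $K$.
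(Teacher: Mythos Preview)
Your proof is correct. The paper takes a slightly different, more Galois-theoretic route: it observes that if $[L:K]=2$ with nontrivial $\sigma\in\Gal(L/K)$, then $\sigma(\alpha)=\alpha+1$ and $\sigma(\beta)=\beta+1$, so $\sigma(\alpha-\beta)=\alpha-\beta$, forcing $\alpha-\beta\in K$; additivity of $\wp$ in characteristic $2$ then gives $a-b=\wp(\alpha-\beta)\in\wp(K)$. Your argument reaches the same intermediate conclusion (your $c$ is exactly $\alpha-\beta$ once $d=1$) but via explicit basis expansion and coefficient comparison rather than the Galois action. The paper's version is a line shorter and conceptually cleaner; yours is more elementary in that it never names the Galois group, and it makes transparent exactly which computation forces $d\in\F_2$.
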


\begin{proof}
Set $L=K(\alpha)=K(\beta)$.  Then $L/K$ is a Galois
extension of degree at most 2.  If $[L:K]=2$ and
$\sigma$ is the nonidentity element of $\Gal(L/K)$ then
\[\sigma(\alpha-\beta)=(\alpha+1)-(\beta+1)=\alpha-\beta.\]
Hence $a-b=\wp(\alpha-\beta)$ with $\alpha-\beta\in K$.
\end{proof}

     We now use Artin-Schreier theory to explicitly
construct totally ramified $(C_2\times C_2)$-extensions
of $K$ whose ramification breaks are as small as
possible.

\begin{prop} \label{Vclass}
Let $K$ be a local field of characteristic 2 with
residue field $\F_4$, and let $\pi_K$ be a uniformizer
for $K$.  There is a single $\C_{\F_4}$-isomorphism
class of objects
\[(M/K,\pi_K+\M_K^2)\in\C_{K,\pi_K}\]
such that $M/K$ is a totally ramified
$(C_2\times C_2)$-extension with lower (and upper)
ramification breaks 1,1.
\end{prop}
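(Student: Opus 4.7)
By Artin--Schreier theory, totally ramified $(C_2\times C_2)$-extensions $M/K$ correspond bijectively to $2$-dimensional $\F_2$-subspaces $V\subseteq K/\wp(K)$, via $V\mapsto K(\wp^{-1}(V))$. The three nontrivial intermediate $C_2$-subextensions correspond to the three nonzero elements of $V$, and by Lemma~\ref{ASbreak} each such subextension has ramification break $1$ if and only if the corresponding class in $K/\wp(K)$ has a representative in $K$ of valuation $-1$. Since the lower breaks of $M/K$ being $1,1$ is equivalent to $\G_1=\G$ and $\G_2=1$, which in turn is equivalent to every nontrivial element of $\Gal(M/K)$ lying in $\G_1\smallsetminus\G_2$, the condition is that all three nonzero classes in $V$ are represented by elements of valuation $-1$.

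My first task is to enumerate such $V$. Using that the series $-b-b^2-b^4-\cdots$ converges to a preimage of any $b\in\M_K$ under $\wp$, one has $\M_K\subseteq\wp(K)$, and since $\wp(\F_4)=\F_2$, every class in $K/\wp(K)$ with a representative of valuation $-1$ has a unique such representative of the form $c\pi_K^{-1}+\epsilon s$ with $c\in\F_4^{\times}$ and $\epsilon\in\{0,1\}$. Writing the three nonzero elements of $V$ as $c_i\pi_K^{-1}+\epsilon_i s \pmod{\wp(K)}$ for $i=1,2,3$, the relation $v_1+v_2+v_3=0$ (which holds in any $2$-dim $\F_2$-space) forces $c_1+c_2+c_3=0$ and $\epsilon_1+\epsilon_2+\epsilon_3=0$. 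Since the $c_i$ must be distinct (otherwise two of the $v_i$ would sum to something of valuation $\ge 0$), and $1+s+s^2=0$ in $\F_4$, necessarily $\{c_1,c_2,c_3\}=\F_4^{\times}$. The $\epsilon_i$ then form an $\F_2$-valued function on $\F_4^{\times}$ with sum $0$, so there are exactly $4$ such subspaces $V$. In particular, existence is guaranteed: the choice $V=\F_4\pi_K^{-1} \pmod{\wp(K)}$ works.

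Next I show these $4$ subspaces form a single orbit under the $\C_{K,\pi_K}$-morphisms. Such a morphism induces a wild automorphism $\phi$ of $K$, which acts on $K/\wp(K)$ (since $\phi$ commutes with $\wp$) and permutes the subspaces. It is enough to consider the basic wild automorphism $\phi(\pi_K)=\pi_K+c_2\pi_K^2$ for $c_2\in\F_4$; a direct expansion shows
\[
\phi(c\pi_K^{-1}+\epsilon s)\equiv c\pi_K^{-1}+\epsilon s+c\,c_2\pmod{\wp(K)},
\]
so the action on the parameters is $(c,\epsilon)\mapsto(c,\epsilon+\overline{c\,c_2})$, where the bar denotes reduction modulo $\F_2=\wp(\F_4)$. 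Viewing each of the $4$ admissible subspaces as an assignment $\epsilon:\F_4^{\times}\to\F_2$ with $\epsilon(1)+\epsilon(s)+\epsilon(s^2)=0$, the action translates any initial assignment $\epsilon$ to $\epsilon+\delta_{c_2}$, where $\delta_{c_2}(c)=\overline{c\,c_2}$. One checks directly that the four functions $\delta_0,\delta_1,\delta_s,\delta_{s^2}$ are precisely the $4$ admissible assignments, so the action is transitive (in fact simply transitive). Therefore all $4$ subspaces lie in one $\C_{K,\pi_K}$-orbit, yielding a unique $\C_{\F_4}$-isomorphism class.

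The only place that requires care is the reduction theory in $K/\wp(K)$ restricted to the valuation $-1$ classes: because $\F_4$ properly contains $\F_p=\F_2$, the residue of the constant term at level $\F_4/\F_2$ cannot be discarded and gives the nontrivial parameter $\epsilon$. Getting the bookkeeping right---both the count of $4$ admissible subspaces and the matching $4$-element image of the wild-inertia action---is the main hurdle; everything else is routine Artin--Schreier manipulation.
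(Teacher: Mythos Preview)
Your proof is correct and follows essentially the same route as the paper's: both arguments parametrize the relevant $(C_2\times C_2)$-extensions via Artin--Schreier theory as four $\F_2$-subspaces determined by a choice of ``constant terms'' modulo $\wp(\F_4)=\F_2$, and both establish uniqueness by showing the depth-$2$ wild automorphisms $\pi_K\mapsto\pi_K+c\pi_K^2$ (with $c\in\F_4$) act simply transitively on these four subspaces. The only cosmetic difference is that the paper fixes the basis $\{s\pi_K^{-1},s^2\pi_K^{-1}\}$ and records the two constants $(\epsilon_1,\epsilon_2)\in\{0,s\}^2$, whereas you track all three nonzero elements of $V$ via a function $\epsilon:\F_4^{\times}\to\F_2$ with vanishing sum; these are equivalent bookkeeping schemes.
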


\begin{proof}
$(C_2\times C_2)$-extensions $M/K$ correspond under
Artin-Schreier theory to subgroups $H$ of $K/\wp(K)$
which are isomorphic to $C_2\times C_2$.  The extension
corresponding to $H$ is totally ramified if and only if
the image of $H$ in $K/(\OO_K+\wp(K))$ is also
isomorphic to $C_2\times C_2$.  Hence by
Lemma~\ref{ASbreak} totally ramified
$(C_2\times C_2)$-extensions $M/K$ with upper
ramification breaks 1,1 correspond to subgroups $H$ of
$\M_K^{-1}/(\wp(K)\cap\M_K^{-1})$ such that $H$ and the
image of $H$ in $\M_K^{-1}/\OO_K$ are both isomorphic to
$C_2\times C_2$.  Since
\[\wp(K)\cap\M_K^{-1}=\wp(\OO_K)=\wp(\F_4)+\wp(\M_K)
=\F_2+\M_K,\]
the quotient $\M_K^{-1}/(\wp(K)\cap\M_K^{-1})$ is an
elementary abelian 2-group of rank 3.  Since
$\M_K^{-1}/\OO_K$ is generated by
$\{s\pi_K^{-1}+\OO_K,s^2\pi_K^{-1}+\OO_K\}$,
each of the subgroups $H$ is generated by a uniquely
determined set of the form
\[S_{\epsilon_1,\epsilon_2}
=\{s\pi_K^{-1}+\epsilon_1+\wp(\OO_K),
s^2\pi_K^{-1}+\epsilon_2+\wp(\OO_K)\}\]
for some $\epsilon_1,\epsilon_2\in\{0,s\}$.  The
extension of $K$ which corresponds to this choice of $H$
is generated by the roots of
$X^2-X-(s\pi_K^{-1}+\epsilon_1)$ and
$X^2-X-(s^2\pi_K^{-1}+\epsilon_2)$.

     Let $H_0$ be the subgroup of $\M_K^{-1}/\wp(\OO_K)$
generated by $S_{0,0}$ and let $M_0/K$ be the
corresponding extension.  Then $M_0/K$ is a totally
ramified $(C_2\times C_2)$-extension with lower
ramification breaks 1,1.  Now let $M/K$ be another
totally ramified $(C_2\times C_2)$-extension with lower
ramification breaks 1,1, and let $H$ be the
corresponding subgroup of $\M_K^{-1}/\wp(\OO_K)$.  Then
$H$ is generated by $S_{\epsilon_1,\epsilon_2}$ for some
$\epsilon_1,\epsilon_2\in\{0,s\}$.  For $c\in\F_4$ let
$\gammab_c\in\Aut_{\F_4}^1(K)$ be defined by
$\gammab_c(\pi_K)=\pi_K+c\pi_K^2$.  Then
\begin{alignat*}{2}
\gammab_c(s\pi_K^{-1}+\epsilon_1)
&\equiv s\pi_K^{-1}+cs+\epsilon_1&&\pmod{\M_K} \\
\gammab_c(s^2\pi_K^{-1}+\epsilon_2)
&\equiv s^2\pi_K^{-1}+cs^2+\epsilon_2&&\pmod{\M_K}.
\end{alignat*}
There is $c_0\in\F_4$ such that
$\epsilon_1+c_0s,\epsilon_2+c_0s^2\in\F_2$.  Then
$\gammab_{c_0}(H)=H_0$, so $\gammab$ extends to an
isomorphism $\gamma:M\ra M_0$.  Therefore
$(M/K,\pi_K+\M_K^2)$ is $\C_{\F_4}$-isomorphic to
$(M_0/K,\pi_K+\M_K^2)$.
\end{proof}

\begin{remark}
The extension $M_0/K$ constructed in the proof of
Proposition~\ref{Vclass} depends on the choice of
uniformizer $\pi_K$ for $K$.
\end{remark}

\begin{lemma} \label{Y}
Let $M_0/K$ be the extension constructed in the proof of
Proposition~\ref{Vclass}.  Thus
$M_0=K(\alpha_1,\alpha_2)$ with
$\alpha_1^2-\alpha_1=s\pi_K^{-1}$ and
$\alpha_2^2-\alpha_2=s^2\pi_K^{-1}$.  Set
$Y=s\alpha_1+s^2\alpha_2$.  Then
\begin{enumerate}[(a)]
\item $\pi_K^{-1}=\wp(\wp(Y))=Y^4+Y$,
\item $\alpha_1=\wp(sY)=s^2Y^2+sY$,
\item $\alpha_2=\wp(s^2Y)=sY^2+s^2Y$.
\end{enumerate}
\end{lemma}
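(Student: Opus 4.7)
The plan is a direct calculation inside $M_0$, using only the Artin-Schreier relations $\alpha_1^2=\alpha_1+s\pi_K^{-1}$ and $\alpha_2^2=\alpha_2+s^2\pi_K^{-1}$, together with the identities $s^2+s=1$, $s^3=1$, $s^4=s$ that hold in $\F_4$.  Characteristic~2 will be essential, since several of the terms cancel in pairs rather than combining.

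The key preliminary step, which feeds into everything else, is to simplify $\wp(Y)$.  Squaring $Y=s\alpha_1+s^2\alpha_2$ gives $Y^2 = s^2\alpha_1^2 + s^4\alpha_2^2$; substituting the Artin-Schreier relations, the two resulting $\pi_K^{-1}$-contributions are each equal to $s^3\pi_K^{-1}=\pi_K^{-1}$, so they cancel in characteristic~2, leaving $Y^2 = s^2\alpha_1+s\alpha_2$.  Adding $Y$ and using $s+s^2=1$ collapses this to $\wp(Y)=\alpha_1+\alpha_2$.

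Part (a) then drops out in two ways.  Applying $\wp$ again to $\alpha_1+\alpha_2$ yields $(s+s^2)\pi_K^{-1}=\pi_K^{-1}$, while the literal expansion $\wp(\wp(Y))=(Y^2+Y)^2+(Y^2+Y)$ simplifies to $Y^4+Y$ because the two middle $Y^2$ terms cancel.  For (b) I would compute $\wp(sY)=s^2Y^2+sY$, substitute the formula $Y^2=s^2\alpha_1+s\alpha_2$ from above, and simplify: the $\alpha_1$-coefficient becomes $s^4+s^2=s+s^2=1$ while the $\alpha_2$-coefficient becomes $s^3+s^3=0$, yielding $\alpha_1$.  Part (c) is completely symmetric under the exchange $s\leftrightarrow s^2$, since $s$ and $s^2$ are conjugate in $\F_4$ and $(\alpha_1,\alpha_2)$ enter the hypothesis symmetrically with coefficients $(s,s^2)$.

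There is no real obstacle here; the whole computation is forced once one observes that squaring the definition of $Y$ eats exactly one $\pi_K^{-1}$ and that $s^3=1$ tidies the bookkeeping.  The only thing to double-check is that every cancellation genuinely uses characteristic~2 rather than an accidental equality, which it does.
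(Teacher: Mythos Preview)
Your computation is correct, and it is exactly the routine verification the paper has in mind: the paper states Lemma~\ref{Y} without proof, treating it as an elementary identity to be checked directly from the Artin-Schreier relations and the arithmetic of $\F_4$. Your key observation that $Y^2=s^2\alpha_1+s\alpha_2$ (the two $\pi_K^{-1}$ terms cancelling in characteristic~2) is the only nontrivial step, and everything else unwinds from it as you describe.
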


     It follows from Lemmas \ref{Y} and \ref{ASval} that
$v_{M_0}(Y)=-1$.  Therefore $\pi_{M_0}=Y^{-1}$ is a
uniformizer for $M_0$.  Define
$\sigmab_1,\sigmab_2\in\Gal(M_0/K)$ by
$\sigmab_1(\alpha_1)=\alpha_1+1$,
$\sigmab_1(\alpha_2)=\alpha_2$,
$\sigmab_2(\alpha_1)=\alpha_1$, and
$\sigmab_2(\alpha_2)=\alpha_2+1$.  Then
$\sigmab_1(Y)=Y+s$ and $\sigmab_2(Y)=Y+s^2$.

     In our applications, it will be enough to consider
extensions of $M_0$:

\begin{prop} \label{M0}
Let $K$ be a local field of characteristic 2 with
residue field $\F_4$, and let $\pi_K$ be a uniformizer
for $K$.  Let $G$ be a nonabelian group of order 8 and
let $L/K$ be a totally ramified $G$-extension with lower
ramification breaks $1,1,b$.  Then there is a
$G$-extension $L_0/K$ with $M_0\subset L_0$ such that
$(L/K,\pi_K+\M_K^2)$ is $\C_{\F_4}$-isomorphic to
$(L_0/K,\pi_K+\M_K^2)$.
\end{prop}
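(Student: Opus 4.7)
The plan is to use the first two ramification breaks to cut out a $(C_2\times C_2)$-subextension of $L$, to identify this subextension with $M_0/K$ via Proposition~\ref{Vclass}, and then to transport the remaining layer $L/M$ across that identification.

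First I would analyze the ramification filtration of $G$. If $b$ equalled $1$ then $G_1/G_2=G$ would be elementary abelian, contradicting the hypothesis that $G$ is nonabelian; hence $b\ge 2$. The multiplicity-$2$ break at $1$ gives $|G_1:G_2|=4$, so $|G_2|=2$, and since no integer in the range $2,3,\dots,b-1$ is a break we have $G_2=G_3=\dots=G_b$. Applying Proposition~\ref{breakfacts}(c) with $i=3$, the fixed field $M:=L^{G_b}$ is a Galois subextension of $L/K$ of degree $4$ with lower ramification breaks $1,1$. The group $\Gal(M/K)=G/G_b$ has order $4$ and cannot be cyclic, because in any $C_4$-extension the quotient $G_1/G_2$ is a quotient of $C_4$, hence of order at most $2$, which is incompatible with a multiplicity-$2$ break at $1$. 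Therefore $M/K$ is a totally ramified $(C_2\times C_2)$-extension with lower ramification breaks $1,1$.

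Next, Proposition~\ref{Vclass} provides a $\C_{\F_4}$-isomorphism
\[\gamma:(M/K,\pi_K+\M_K^2)\lra(M_0/K,\pi_K+\M_K^2),\]
so $\gamma:M\to M_0$ is an $\F_4$-algebra isomorphism with $\gamma(K)=K$ and $\gamma(\pi_K)\equiv\pi_K\pmod{\M_K^2}$. Fix an algebraic closure $\Kb$ containing $M_0$ and extend $\gamma$ to an $\F_4$-algebra embedding $\tilde\gamma:L\hra\Kb$; such an extension exists because $L/M$ is a finite algebraic extension. Set $L_0:=\tilde\gamma(L)$, a subfield of $\Kb$ which contains $\tilde\gamma(M)=M_0$.

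Finally I would verify that $L_0$ has the required properties. Since $\tilde\gamma(K)=\gamma(K)=K$, the map $\tilde\gamma:L\to L_0$ is an $\F_4$-algebra isomorphism carrying $K$ onto itself, so $L_0/K$ is Galois and $\sigma\mapsto\tilde\gamma\circ\sigma\circ\tilde\gamma^{-1}$ is a group isomorphism $\Gal(L/K)\cong\Gal(L_0/K)$, which realizes $L_0/K$ as a $G$-extension containing $M_0$. The congruence $\tilde\gamma(\pi_K)\equiv\pi_K\pmod{\M_K^2}$ inherited from $\gamma$ shows that $\tilde\gamma$ is a $\C_{\F_4}$-morphism from $(L/K,\pi_K+\M_K^2)$ to $(L_0/K,\pi_K+\M_K^2)$, completing the argument. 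The only nonformal step is the structural claim that $\Gal(M/K)\cong C_2\times C_2$ with breaks $1,1$; once that is in hand, the passage from $M\cong M_0$ to $L\cong L_0$ is automatic by extending the base isomorphism up through $L$.
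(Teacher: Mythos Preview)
Your proposal is correct and follows essentially the same route as the paper: identify $M=L^{G_b}$ as a $(C_2\times C_2)$-extension with breaks $1,1$ via Proposition~\ref{breakfacts}(c), invoke Proposition~\ref{Vclass} to get an isomorphism $M\cong M_0$ inducing a wild automorphism of $K$, and then extend this isomorphism from $M$ up to $L$. You supply more detail than the paper (the explicit check that $b>1$ and the exclusion of $\Gal(M/K)\cong C_4$), but the argument is the same.
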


\begin{proof}
Let $M=L^{G_b}$ be the fixed field of $G_b$ acting on
$L$.  Using Proposition~\ref{breakfacts}(c) we see that
$M/K$ is a $(C_2\times C_2)$-extension with lower and
upper breaks 1,1.  It follows from
Proposition~\ref{Vclass} that there is an isomorphism
$\etab:M\ra M_0$ which induces a wild automorphism of
$K$.  There is a field $L_0$ containing $M_0$ such that
$\etab$ extends to an isomorphism $\eta:L\ra L_0$.  Then
$(L/K,\pi_K+\M_K^2)\cong(L_0/K,\pi_K+\M_K^2)$.
\end{proof}

     The following result will be used in
Section~\ref{Dext}:

\begin{prop} \label{Maut}
Let $M_0/K$ be the $(C_2\times C_2)$-extension
constructed in the proof of Proposition~\ref{Vclass} and
let $r\in\F_4$.  Then there is
$\rhob_r\in\Aut_{\F_4}^1(M_0)$ such that
$\rhob_r(\pi_K)=\pi_K+r\pi_K^3$,
$\rhob_r(Y)\equiv Y+r\pi_K\pmod{\M_{M_0}^8}$, and
$\rhob_r(\pi_{M_0})\equiv\pi_{M_0}+r\pi_{M_0}^6
\pmod{\M_{M_0}^9}$.
\end{prop}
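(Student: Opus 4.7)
The plan is to produce $\rhob_r$ by first exhibiting a wild $\F_4$-automorphism $\gamma_r$ of $K$ with $\gamma_r(\pi_K)=\pi_K+r\pi_K^3$, then showing $\gamma_r$ lifts to $M_0$, and finally selecting among the lifts the one whose action on $Y$ has the correct leading behavior.

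The automorphism $\gamma_r\in\Aut_{\F_4}^1(K)$ is determined uniquely by the prescription $\pi_K\mapsto\pi_K+r\pi_K^3$ through the anti-isomorphism $\theta_{\pi_K}:\Aut_{\F_4}^1(K)\to\N(\F_4)$. To show $\gamma_r$ lifts to an automorphism of $M_0$ it suffices, via Artin-Schreier theory, to verify that the classes of $s\pi_K^{-1}$ and $s^2\pi_K^{-1}$ in $K/\wp(K)$ are fixed by $\gamma_r$. Expanding
\[
\gamma_r(\pi_K^{-1})=\pi_K^{-1}(1+r\pi_K^2)^{-1}
=\pi_K^{-1}+r\pi_K+r^2\pi_K^3+r^3\pi_K^5+\cdots
\]
in characteristic $2$, one sees that $\gamma_r(\pi_K^{-1})-\pi_K^{-1}\in\M_K$. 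The successive approximation $y=z+z^2+z^4+\cdots$ solves $\wp(y)=z$ for any $z\in\M_K$, so $\M_K\subseteq\wp(K)$. Hence the Artin-Schreier classes are preserved, $\gamma_r(M_0)=M_0$, and $\gamma_r$ admits four extensions to $\Aut_{\F_4}(M_0)$, differing by $\Gal(M_0/K)$.

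For any such extension $\rhob_r$, set $\delta=\rhob_r(Y)-Y$. Applying $\rhob_r$ to $Y^4+Y=\pi_K^{-1}$ and using the identity $(a+b)^4=a^4+b^4$ in characteristic $2$ yields
\[
\delta^4+\delta=\gamma_r(\pi_K^{-1})-\pi_K^{-1}
=r\pi_K+r^2\pi_K^3+r^3\pi_K^5+\cdots\in\M_{M_0}.
\]
The same successive approximation shows $\wp$ is a bijection on $\M_{M_0}$, hence so is $\wp^2$; in particular, exactly one of the four possible values of $\delta$ lies in $\M_{M_0}$, because they differ by elements of $\ker\wp^2=\F_4$. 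This uniquely singles out the desired $\rhob_r$.

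It remains to verify the congruences by direct expansion. From $Y^4+Y=\pi_K^{-1}$ one computes $\pi_K=\pi_{M_0}^4+\pi_{M_0}^7+\pi_{M_0}^{10}+\cdots$. Solving $\delta^4+\delta=r\pi_K+r^2\pi_K^3+\cdots$ iteratively in $\M_{M_0}$, where the contribution of $\delta^4$ has valuation at least $16$ once $v_{M_0}(\delta)=4$ is known, gives $\delta\equiv r\pi_{M_0}^4+r\pi_{M_0}^7\equiv r\pi_K\pmod{\M_{M_0}^8}$; this is the second congruence. The third follows from $\rhob_r(\pi_{M_0})=(\pi_{M_0}^{-1}+\delta)^{-1}=\pi_{M_0}(1+\delta\pi_{M_0})^{-1}$ by expanding the geometric inverse modulo $\M_{M_0}^9$. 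The main obstacle is really just the bookkeeping in this iterative solution: since $\pi_K$ contributes terms at valuations $4$ and $7$, one must track $\delta$ to precision $\M_{M_0}^8$ to capture both before claiming $\delta\equiv r\pi_K\pmod{\M_{M_0}^8}$.
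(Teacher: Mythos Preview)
Your proof is correct and follows essentially the same route as the paper's: define the wild automorphism of $K$ by $\pi_K\mapsto\pi_K+r\pi_K^3$, lift it to $M_0$ via the relation $Y^4+Y=\pi_K^{-1}$, extract the congruence for $\rhob_r(Y)$, and then invert to get the congruence for $\pi_{M_0}=Y^{-1}$. The paper packages the lifting step slightly differently---writing $\rhoh_r(\pi_K^{-1})\equiv\pi_K^{-1}+\wp(\wp(r\pi_K))\pmod{\M_K^2}$ and invoking $\M_K^2=\wp(\wp(\M_K^2))$---whereas you solve $\delta^4+\delta=\gamma_r(\pi_K^{-1})-\pi_K^{-1}$ iteratively in $\M_{M_0}$ and are more explicit about singling out the unique wild lift among the four $\Gal(M_0/K)$-translates; but these are cosmetic differences in presentation, not in substance.
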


\begin{proof}
Define $\rhoh_r\in\Aut_{\F_4}^1(K)$ by
$\rhoh_r(\pi_K)=\pi_K+r\pi_K^3$.  Then
\[\rhoh_r(\pi_K^{-1})\equiv\pi_K^{-1}+r\pi_K
\equiv\pi_K^{-1}+\wp(\wp(r\pi_K))\pmod{\M_K^2}.\]
Since $\M_K^2=\wp(\wp(\M_K^2))$, it follows from
Lemma~\ref{Y}(a) that $\rhoh_r$ extends to
$\rhob_r\in\Aut_{\F_4}^1(M_0)$ such that
\[\rhob_r(Y)\equiv Y+r\pi_K\pmod{\M_{M_0}^8}.\]
Since $\pi_{M_0}=Y^{-1}$ and
$\pi_K=1/(Y^4-Y)=\pi_{M_0}^4/(1-\pi_{M_0}^3)$ we get
\begin{alignat*}{2}
\rhob_r(\pi_{M_0})&\equiv\frac{\pi_{M_0}}{1+r\pi_{M_0}\pi_K}
&&\pmod{\M_{M_0}^{10}} \\
&\equiv\pi_{M_0}+r\pi_{M_0}^2\pi_K&&\pmod{\M_{M_0}^{10}} \\
&\equiv\pi_{M_0}+r\pi_{M_0}^6&&\pmod{\M_{M_0}^9}.
\end{alignat*}
\end{proof}

\section{Quaternion extensions}
\label{Qext}

Let $K$ be a local field of characteristic 2 with
residue field $\F_4$ and let $\pi_K$ be a uniformizer
for $K$.  In this section we classify isomorphism
classes of objects $(L/K,\pi_K+\M_K^2)\in\C_{K,\pi_K}$ such
that $L/K$ is a $Q_8$-extension with lower ramification
breaks 1,1,3.  As in Section~\ref{elem} we assume that
all extensions of $K$ are contained in an algebraic
closure $\Kb$ of $K$.

     Our first lemma shows that 1,1,3 are the smallest
possible lower breaks for a totally ramified
$Q_8$-extension.

\begin{lemma}
Let $L/K$ be a totally ramified Galois extension such
that $\Gal(L/K)\cong Q_8$.  Let $b_1\le b_2\le b_3$ be
the lower ramification breaks of $L/K$ and let
$u_1\le u_2\le u_3$ be the upper ramification breaks.
Then $b_1\ge1$, $b_2\ge1$, $b_3\ge3$, $u_1\ge1$,
$u_2\ge1$, and $u_3\ge\frac32$.
\end{lemma}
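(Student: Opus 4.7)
The plan is to separate the argument into the easy lower bounds $b_1, b_2, u_1, u_2 \ge 1$, the substantive bound $b_3 \ge 3$, and a short arithmetic calculation for $u_3 \ge 3/2$.

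Since $L/K$ is a totally ramified $p$-extension in residue characteristic $p = 2$, the inertia and wild inertia groups both equal $G$, so $G_0 = G_1 = G$. This immediately gives $b_1 \ge 1$, and then $b_2 \ge b_1 \ge 1$. In turn, $u_1 = b_1 \ge 1$ and $u_2 = u_1 + (b_2 - b_1)/2 \ge 1$.

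The heart of the proof is to identify $G_{b_3}$. The subgroups of $Q_8$ are $\{1, Z, \langle i\rangle, \langle j\rangle, \langle k\rangle, Q_8\}$, where $Z$ is the center of order $2$, and among these the only elementary abelian quotient of order greater than $2$ is $Q_8/Z \cong C_2 \times C_2$. Since every $G_b/G_{b+1}$ must be elementary abelian, this rules out multiplicity $3$ at any break; forces any break of multiplicity $2$ to be the transition $G_b = Q_8 \to G_{b+1} = Z$; and eliminates the multiplicity pattern $(1,2)$ (since $G_{b_2}$ would then be a proper subgroup of $Q_8$, hence cyclic of order $4$, with no elementary abelian quotient of order $4$). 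So the multiplicity pattern is $(1,1,1)$ or $(2,1)$, and in both cases $G_{b_3}$ has order $2$; since $Q_8$ has a unique subgroup of order $2$, $G_{b_3} = Z$.

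It then follows that $L/L^Z$ is a $C_2$-extension whose unique break equals $b_3$ by Proposition~\ref{breakfacts}(a). Since every ramified $C_2$-extension in residue characteristic $2$ has odd ramification break (by the standard reduction of Artin-Schreier generators modulo $\wp$, as implicitly used in the proof of Proposition~\ref{Vclass}), $b_3$ is odd. Combined with $b_3 \ge 1$, it suffices to rule out $b_3 = 1$; but $b_3 = 1$ would force $b_1 = b_2 = 1$, making $G_1/G_2 = Q_8$ elementary abelian, a contradiction. Hence $b_3 \ge 3$, and the arithmetic $u_3 = b_1 + (b_2 - b_1)/2 + (b_3 - b_2)/4 = (2b_1 + b_2 + b_3)/4 \ge (2 + 1 + 3)/4 = 3/2$ completes the bounds. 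The main obstacle is the structural case analysis that pins down $G_{b_3} = Z$; once that is in hand, the remaining steps are routine.
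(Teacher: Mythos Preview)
Your argument is correct. The paper's own proof is much terser: it simply notes that $b_i\ge1$ and $u_i\ge1$ hold because $L/K$ is a totally ramified $2$-extension, and then cites Theorem~1.6 of \cite{elder} for the bounds $b_3\ge3$ and $u_3\ge\tfrac32$. Your route is genuinely different and more self-contained: you exploit the specific subgroup lattice of $Q_8$ (every proper nontrivial subgroup is cyclic, and there is a unique subgroup of order $2$) together with the elementary-abelian nature of $G_b/G_{b+1}$ to pin down $G_{b_3}=Z(Q_8)$, then invoke the Artin--Schreier fact that a ramified $C_2$-extension in characteristic $2$ has odd break (this is essentially Lemma~\ref{ASbreak}, not Proposition~\ref{Vclass}). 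What you gain is an elementary proof that does not depend on an external classification of ramification sequences for nonabelian $p^3$-extensions; what the paper gains is brevity and a uniform reference that also covers the $D_4$ case in Section~\ref{Dext}. One small streamlining: once your case analysis shows the multiplicity pattern is $(2,1)$ or $(1,1,1)$, you already have $b_3>b_2\ge1$, so $b_3\ge2$, and oddness of $b_3$ finishes without the separate exclusion of $b_3=1$.
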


\begin{proof}
Since $L/K$ is a totally ramified Galois 2-extension
we must have $b_i\ge1$ and $u_i\ge1$ for all $i$.  The
bounds on $b_3$ and $u_3$ follow from Theorem~1.6 of
\cite{elder}.
\end{proof}

\begin{lemma} \label{perturb}
Let $L/K$ be a Galois $2$-extension and let $N$ be a
normal subgroup of $\Gal(L/K)$ of order $2$.  Let
$M=L^N$ be the fixed field of $N$, and let
$d\in M$ be such that $L$ is generated over $M$ by
a root of $X^2-X-d$.  Let $L'/K$ be a Galois extension
such that $M\subset L'$ and there is an isomorphism
$\theta:\Gal(L'/K)\ra\Gal(L/K)$ with
$\sigma|_{M}=\theta(\sigma)|_{M}$ for all
$\sigma\in\Gal(L'/K)$.  Then there is $\kappa\in K$ such
that $L'$ is generated over $M$ by a root of
$X^2-X-(d+\kappa)$.
\end{lemma}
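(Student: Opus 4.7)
The plan is to combine Artin--Schreier theory with the additive form of Hilbert's Theorem~90 for the extension $M/K$. The condition on $\theta$ says precisely that the two central extensions
\[
1\to N\to\Gal(L/K)\to\Gal(M/K)\to1, \qquad
1\to N'\to\Gal(L'/K)\to\Gal(M/K)\to1
\]
are isomorphic via $\theta$ with identity on the quotient $H:=\Gal(M/K)$; in particular $\theta$ carries the nontrivial element $\tau'$ of $N'=\Gal(L'/M)$ to the nontrivial element $\tau$ of $N$. Since $[L':M]=2$, Artin--Schreier gives $L'=M(\beta)$ with $\wp(\beta)=d'\in M$, and the goal becomes showing $d-d'\in K+\wp(M)$; once this is known, $\beta+m$ for a suitable $m\in M$ will be a root of $X^2-X-(d+\kappa)$ for some $\kappa\in K$.

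For each $h\in H$ pick any lift $\sigma'_h\in\Gal(L'/K)$ and set $\sigma_h:=\theta(\sigma'_h)$, which by hypothesis also restricts to $h$ on $M$. Define $a_h:=\sigma_h(\alpha)-\alpha$ and $b_h:=\sigma'_h(\beta)-\beta$; since $\wp(a_h)=h(d)-d\in M$ and similarly for $b_h$, both $a_h$ and $b_h$ lie in $M$. Writing $\sigma_h\sigma_{h'}=\sigma_{hh'}\tau^{\epsilon(h,h')}$ and evaluating on $\alpha$ produces a $2$-cocycle $\epsilon(h,h')=a_h+h(a_{h'})-a_{hh'}\in\F_2$, and analogously $\epsilon'(h,h')=b_h+h(b_{h'})-b_{hh'}$. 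Applying $\theta$ to the relation $\sigma'_h\sigma'_{h'}=\sigma'_{hh'}(\tau')^{\epsilon'(h,h')}$, together with $\theta(\tau')=\tau$, forces $\epsilon=\epsilon'$.

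Setting $e_h:=a_h-b_h\in M$, the equality $\epsilon=\epsilon'$ becomes $e_{hh'}=e_h+h(e_{h'})$, so $(e_h)_{h\in H}$ is a $1$-cocycle of $H$ with values in $M$. Additive Hilbert~90 (equivalently the normal basis theorem) gives $H^1(H,M)=0$, so there exists $m\in M$ with $e_h=h(m)-m$ for all $h\in H$. Applying $\wp$ yields $h\bigl(\wp(m)-(d-d')\bigr)=\wp(m)-(d-d')$ for every $h\in H$, so $\wp(m)-(d-d')\in M^H=K$. Write this as $-\kappa$ with $\kappa\in K$; then in characteristic $2$,
\[
\wp(\beta+m)=\wp(\beta)+\wp(m)=d'+(d-d'-\kappa)=d-\kappa=d+\kappa,
\]
and $L'=M(\beta+m)$ is generated over $M$ by a root of $X^2-X-(d+\kappa)$, as required.

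The main obstacle is organizing the cocycle bookkeeping so that the hypothesis on $\theta$ translates cleanly into the identity $\epsilon=\epsilon'$ and thus into an additive $1$-coboundary statement; once that connection is set up, Hilbert~90 and a short characteristic~$2$ calculation finish the argument.
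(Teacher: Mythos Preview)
Your argument is correct. Note, however, that the paper does not actually give a proof of this lemma: it simply cites Lemma~1.8(b) of Saltman's paper, where the result is established for Galois $p$-extensions of rings of characteristic~$p$. Your write-up is the natural self-contained version of that argument in the present setting---the compatibility hypothesis on~$\theta$ forces the two $\F_2$-valued $2$-cocycles of the central extensions to agree, whence the difference $e_h=a_h-b_h$ is an additive $1$-cocycle for $H=\Gal(M/K)$ acting on~$M$, and additive Hilbert~90 turns it into a coboundary $h(m)-m$; applying $\wp$ then shows $d-d'\in\wp(M)+K$. One small point worth making explicit (you use it implicitly when asserting $a_h,b_h\in M$): the centrality of $N$ in $\Gal(L/K)$, automatic since $|N|=2$ and $N$ is normal, is what guarantees $\tau(\sigma_h(\alpha))=\sigma_h(\alpha)+1$ and hence $a_h\in L^N=M$.
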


\begin{proof}
This is proved as Lemma~1.8(b) in \cite{salt}, in the
more general setting of Galois extensions of rings of
characteristic $p>0$.
\end{proof}

     Let $M_0/K$ be the totally ramified
$(C_2\times C_2)$-extension constructed in
Proposition~\ref{Vclass}, and let
$Y=s\alpha_1+s^2\alpha_2$ be the generator for $M_0$
over $K$ defined in Lemma~\ref{Y}.  For $\kappa\in K$
let $L_{\kappa}$ be the extension of $M_0$ generated by
the roots of $X^2-X-Y^3-\kappa$.

\begin{prop} \label{Qram}
\begin{enumerate}[(a)]
\item Let $L/M_0$ be a finite extension.  Then $L/K$ is
a $Q_8$-extension if and only if $L=L_{\kappa}$ for some
$\kappa\in K$.
\item Let $\kappa\in\OO_K$.  Then $L_{\kappa}/K$ has
lower ramification breaks 1,1,3 and upper ramification
breaks $1,1,\frac32$.
\item Let $\kappa\in K$ be such that $r=-v_K(\kappa)$
satisfies $r\ge3$ and $2\nmid r$.  Then $L_{\kappa}/K$
has lower ramification breaks $1,1,1+4(r-1)$ and upper
ramification breaks $1,1,r$.
\end{enumerate}
\end{prop}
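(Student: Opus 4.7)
The plan is to prove (a) first, by an explicit construction showing $L_\kappa/K$ is always $Q_8$ combined with Lemma~\ref{perturb} for the converse, and then to deduce (b) and (c) by identifying the ramification break of the Artin-Schreier extension $L_\kappa/M_0$ via Lemma~\ref{ASbreak} and assembling the breaks of $L_\kappa/K$ from Proposition~\ref{breakfacts}.

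For the ``$L_\kappa$ is $Q_8$'' half of (a), I lift the generators $\sigmab_1, \sigmab_2$ of $\Gal(M_0/K)$ to $\Gal(L_\kappa/K)$ explicitly. Since $\sigmab_i$ fixes $K$, the difference $\sigmab_i(Y^3 + \kappa) - (Y^3 + \kappa) = \sigmab_i(Y^3) - Y^3$ is independent of $\kappa$; expanding $(Y+s)^3$ and $(Y+s^2)^3$ in characteristic 2 and using $s^3 = 1$, $s + s^2 = 1$, one finds this difference equals $\wp(y_i)$ for explicit $y_i$ of the shape $s^j Y + s^k$ (for instance $y_1 = s^2 Y + s$). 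Hence each $\sigmab_i$ lifts to $\sigma_i \in \Gal(L_\kappa/K)$ with $\sigma_i(\beta) = \beta + y_i$ or $\beta + y_i + 1$, where $\beta^2 - \beta = Y^3 + \kappa$ and $\tau$ generates $\Gal(L_\kappa/M_0)$. A direct calculation gives $\sigma_i^2(\beta) = \beta + (1 + \sigmab_i)(y_i)$ independent of the choice of lift, and the crucial computation $(1 + \sigmab_i)(y_i) = s^3 = 1$ holds for each of the three nontrivial cosets $\sigmab_1, \sigmab_2, \sigmab_1\sigmab_2$. Thus $\sigma_i^2 = \tau$ for every lift, every nonidentity element of $\Gal(L_\kappa/K)$ has order 4, and the group is $Q_8$ (forcing $[L_\kappa:M_0] = 2$ as a byproduct). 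For the converse, if $L/K$ is any $Q_8$-extension containing $M_0$, then $[L:M_0] = 2$ since $Z(Q_8)$ is the unique order-2 subgroup of $Q_8$; applying Lemma~\ref{perturb} with $L_0$ as the reference extension yields $L = M_0(\beta)$ with $\beta^2 - \beta = Y^3 + \kappa$ for some $\kappa \in K$.

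For (b) and (c), Proposition~\ref{breakfacts}(b) applied to $M_0 \subset L_\kappa$ gives upper breaks $1, 1$ for $L_\kappa/K$, which (being $\leq 1$) also equal lower breaks $b_1 = b_2 = 1$; by Proposition~\ref{breakfacts}(a) the remaining lower break $b_3$ equals the single ramification break of the $C_2$-extension $L_\kappa/M_0$. In (b), $v_{M_0}(Y^3 + \kappa) = -3$ is already odd, so Lemma~\ref{ASbreak} gives $b_3 = 3$ at once. In (c), $v_{M_0}(Y^3 + \kappa) = -4r$ is even, and I must find a $\wp(M_0)$-equivalent representative of odd valuation. Using $\pi_K^{-1} = \pi_{M_0}^{-4} + \pi_{M_0}^{-1}$ (from Lemma~\ref{Y}(a)) and expanding $\pi_K^{-r}$ in $\pi_{M_0}$, the leading terms of $\kappa = c_0 \pi_K^{-r} + \ldots$ are $c_0 \pi_{M_0}^{-4r} + c_0 \pi_{M_0}^{-4r + 3} + \ldots$ (the second coefficient being $c_0\binom{r}{1} = c_0$ in characteristic 2, since $r$ is odd). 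Subtracting $\wp(\sqrt{c_0}\pi_{M_0}^{-2r})$ eliminates the $-4r$ term while introducing one at valuation $-2r$, which for $r \geq 3$ is dominated by the surviving $c_0 \pi_{M_0}^{-4r+3}$; the lower-order contributions from $\pi_K^{-(r-1)}, \ldots$ lie at valuations $\geq -4r + 4 > -(4r-3)$. So the reduced representative has odd leading valuation $-(4r - 3)$, and adding $Y^3$ (valuation $-3$) does not disturb it. Lemma~\ref{ASbreak} gives $b_3 = 4r - 3 = 1 + 4(r - 1)$, and the upper breaks $1, 1, \tfrac{3}{2}$ in (b) and $1, 1, r$ in (c) follow from $u_{i+1} - u_i = 2^{-i}(b_{i+1} - b_i)$.

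The main obstacle is the threefold verification of $(1 + \sigmab_i)(y_i) = 1$ in (a): this triple coincidence --- which is exactly what distinguishes $Q_8$ from $D_4$ and $C_4 \times C_2$ among the extensions of $V$ by $C_2$ --- rests on the identities $s^3 = 1$ and $s + s^2 = 1$ in $\F_4$. The secondary challenge is the careful reduction in (c): one must track the leading two terms of $\kappa$ through a single application of $\wp$ to reveal the hidden odd-valuation contribution $c_0 \pi_{M_0}^{-4r+3}$ in the $(1 + \pi_{M_0}^3)^r$ expansion, and verify that no other term of $\kappa$ can intrude.
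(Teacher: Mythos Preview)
Your approach is essentially the paper's. For the forward direction of (a) the paper lifts $\sigmab_1,\sigmab_2$ explicitly (choosing $y_1=s^2Y+s^2$, $y_2=sY+s$; your $y_1=s^2Y+s$ differs by an element of $\ker\wp=\F_2$, so both work), computes $\sigma_1^2=\sigma_2^2=\tau$, and also checks $\sigma_1\sigma_2\neq\sigma_2\sigma_1$ directly. You instead verify $\sigma_i^2=\tau$ for \emph{all three} nontrivial cosets, which is an equally valid route to $Q_8$. (Small slip: ``every nonidentity element has order 4'' is not literally true, since $\tau$ has order 2; you mean every element outside $\langle\tau\rangle$.) Your arguments for (b) and (c) match the paper's almost word for word, including the reduction $\kappa\equiv cY^{4r}+cY^{4r-3}\pmod{\M_{M_0}^{-4r+4}}$ and the subtraction of $\wp(c^2Y^{2r})$.

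There is one genuine omission in the converse of (a). To apply Lemma~\ref{perturb} you need an isomorphism $\theta:\Gal(L/K)\to\Gal(L_0/K)$ with $\sigma|_{M_0}=\theta(\sigma)|_{M_0}$ for all $\sigma$. Both Galois groups are abstractly $Q_8$ and both restrict to $\Gal(M_0/K)\cong C_2\times C_2$ via the quotient by the center, but the two induced identifications of $Q_8/Z(Q_8)$ with $\Gal(M_0/K)$ may differ by an automorphism of $C_2\times C_2$. The paper handles this by observing that the natural map $\Aut(Q_8)\to\Aut(Q_8/Z(Q_8))$ is surjective, so any such twist can be absorbed into $\theta$; you should add this one line before invoking the lemma.
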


\begin{proof}
(a) Let $\kappa\in K$ and let $\alpha_3$ be a root of
$X^2-X-(Y^3+\kappa)$; then $L_{\kappa}=M_0(\alpha_3)$.
Since $\sigmab_1\in\Gal(M_0/K)$ satisfies
$\sigmab_1(Y)=Y+s$ we get
\begin{align*}
\wp(\alpha_3+s^2Y+s^2)
&=\wp(\alpha_3)+\wp(s^2Y)+\wp(s^2) \\
&=Y^3+\kappa+sY^2+s^2Y+1 \\
&=(Y+s)^3+\kappa \\
&=\sigmab_1(Y^3+\kappa).
\end{align*}
Therefore we may extend $\sigmab_1$ to
$\sigma_1\in\Gal(L_{\kappa}/K)$ by setting
\begin{equation} \label{sig1a3Q}
\sigma_1(\alpha_3)=\alpha_3+s^2Y+s^2.
\end{equation}
Similarly, we may extend $\sigmab_2$ to
$\sigma_2\in\Gal(L_{\kappa}/K)$ by setting
\begin{equation} \label{sig2a3Q}
\sigma_2(\alpha_3)=\alpha_3+sY+s.
\end{equation}
It follows that $L_{\kappa}/K$ is Galois, and that
\begin{align*}
\sigma_1(\sigma_2(\alpha_3))&=\sigma_1(\alpha_3+sY+s)
=\alpha_3+s^2Y+s^2+s(Y+s)+s=\alpha_3+Y+s \\
\sigma_2(\sigma_1(\alpha_3))&=\sigma_2(\alpha_3+s^2Y+s^2)
=\alpha_3+sY+s+s^2(Y+s^2)+s^2=\alpha_3+Y+s^2,
\end{align*}
Hence $\Gal(L_{\kappa}/K)$ is a nonabelian group of order 8.
Since
\begin{align*}
\sigma_1^2(\alpha_3)&=\sigma_1(\alpha_3+s^2Y+s^2)
=\alpha_3+s^2Y+s^2+s^2(Y+s)+s^2=\alpha_3+1 \\
\sigma_2^2(\alpha_3)&=\sigma_2(\alpha_3+sY+s)
=\alpha_3+sY+s+s(Y+s^2)+s=\alpha_3+1,
\end{align*}
$\sigma_1$ and $\sigma_2$ have order 4.  Hence
$\Gal(L_{\kappa}/K)\cong Q_8$.

     Now let $L/M_0$ be an extension such that
$L/K$ is a $Q_8$-extension.  Let $Z(Q_8)$ be the center
of $Q_8$.  The natural homomorphism
$\Aut(Q_8)\ra\Aut(Q_8/Z(Q_8))$ is onto, so there is an
isomorphism $\theta:\Gal(L/K)\ra\Gal(L_0/K)$ such that
$\sigma|_{M_0}=\theta(\sigma)|_{M_0}$ for all
$\sigma\in\Gal(L/K)$.  Hence by
Lemma~\ref{perturb} there exists $\kappa\in K$
such that $L=L_{\kappa}$.
\\[\smallskipamount]
(b) Since $v_{M_0}(Y^3+\kappa)=-3$, $L_{\kappa}/M_0$ is
a $C_2$-extension with lower ramification break 3.
Hence by Proposition~\ref{breakfacts}(a), 3 is a lower
break of $L_{\kappa}/K$.  Since 1,1 are upper breaks of
$M_0/K$, it follows from Proposition~\ref{breakfacts}(b)
that 1,1 are upper breaks of $L_{\kappa}/K$.  Hence 1,1
are lower breaks of $L_{\kappa}/K$.  Therefore
$L_{\kappa}/K$ has lower breaks 1,1,3 and upper breaks
$1,1,\frac32$.
\\[\smallskipamount]
(c) We have $\kappa\equiv c\pi_K^{-r}\pmod{\M_K^{-r+1}}$
for some $c\in\F_4^{\times}$.  Since $r\ge3$ and
$2\nmid r$ we get
\[\kappa\equiv c(Y^4-Y)^r\equiv cY^{4r}-cY^{4r-3}
\equiv-cY^{4r-3}+\wp(c^2Y^{2r})\pmod{\M_{M_0}^{-4r+4}}.\]
Let $\kappa'=\kappa-\wp(c^2Y^{2r})$.  Then $L_{\kappa}$
is generated over $M_0$ by a root of
$X^2-X-(Y^3+\kappa')$.  Since
$v_{M_0}(Y^3)=-3>-4r+3=v_{M_0}(\kappa')$,
$L_{\kappa}/M_0$ is a $C_2$-extension with lower break
$-v_{M_0}(Y^3+\kappa')=1+4(r-1)$.  It now follows by the
argument used in the proof of (b) that $L_{\kappa}/K$
has lower breaks $1,1,1+4(r-1)$ and upper breaks
$1,1,r$.
\end{proof}

\begin{theorem} \label{embedQ}
\begin{enumerate}[(a)]
\item There are precisely two extensions $L/M_0$ such
that $L/K$ is a totally ramified $Q_8$-extension with
lower ramification breaks $1,1,3$ and upper ramification
breaks $1,1,\frac32$.  These are the fields $L_0,L_s$
generated over $M_0$ by the roots of
$X^2-X-(Y^3+\delta)$, with $\delta\in\{0,s\}$.
\item Let $L/K$ be a totally ramified $Q_8$-extension
with lower ramification breaks $1,1,3$.  Then
$(L/K,\pi_K+\M_K^2)$ is $\C_{\F_4}$-isomorphic either to
$(L_0/K,\pi_K+\M_K^2)$, or to $(L_s/K,\pi_K+\M_K^2)$.
\item The objects $(L_0/K,\pi_K+\M_K^2)$,
$(L_s/K,\pi_K+\M_K^2)$ are not $\C_{\F_4}$-isomorphic.
\end{enumerate}
\end{theorem}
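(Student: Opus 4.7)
I would prove the three parts in order.

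For \textbf{part (a)}, my plan is to put every $\kappa\in K$ into a normal form modulo $\wp(M_0)$ and then read off the ramification breaks using Proposition~\ref{Qram}. By Proposition~\ref{Qram}(a) and Lemma~\ref{2diff}, extensions $L\supset M_0$ making $L/K$ a $Q_8$-extension correspond bijectively to classes $\kappa+(\wp(M_0)\cap K)$ with $\kappa\in K$. A direct Artin--Schreier calculation shows $c\pi_K^{-1}=\wp(c^2Y^2+cY)\in\wp(M_0)$ for every $c\in\F_4$, and for $v_K(\kappa)=-2m$ with $m\ge1$ and leading coefficient $c$, subtracting $\wp(\sqrt{c}\,\pi_K^{-m})\in\wp(K)$ strictly raises the valuation. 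Iterating, every class has a representative of the form $\kappa_0+\sum_{r\ge3,\,r\text{ odd}}a_r\pi_K^{-r}$ with $\kappa_0\in\OO_K$ and $a_r\in\F_4$. Proposition~\ref{Qram}(b,c) then forces all $a_r=0$ when the breaks are $1,1,3$, reducing us to $\kappa\in\OO_K$. Since $\wp(M_0)\cap\OO_K=\wp(\OO_{M_0})\cap\OO_K=\F_2+\M_K$, the extensions are parametrized by $\OO_K/(\F_2+\M_K)\cong\F_4/\F_2\cong\F_2$ with representatives $0$ and $s$; and $L_0\ne L_s$ because $s\notin\F_2+\M_K$.

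\textbf{Part (b)} is an immediate consequence: by Proposition~\ref{M0}, every totally ramified $Q_8$-extension $L/K$ with lower breaks $1,1,3$ is $\C_{\F_4}$-isomorphic to one containing $M_0$, and (a) classifies the latter as $L_0$ or $L_s$.

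For \textbf{part (c)} I would assume a $\C_{\F_4}$-isomorphism $\gamma\colon L_0\to L_s$ exists and derive a contradiction. Since $M_0$ is the fixed field of the central ramification subgroup $G_3$ and the induced isomorphism of Galois groups preserves the ramification filtration, $\gamma(M_0)=M_0$; set $\bar\gamma=\gamma|_{M_0}$. The heart of the argument is to show $\bar\gamma(Y)=Y+j+\xi$ for some $j\in\F_4$ and $\xi\in\M_{M_0}^4$. Writing $\bar\gamma(Y)=Y+\Delta$, the identity $Y^4+Y=\pi_K^{-1}$ from Lemma~\ref{Y}(a) becomes $\Delta^4+\Delta=\bar\gamma(\pi_K^{-1})-\pi_K^{-1}$, with right-hand side in $\OO_K$ (as $\bar\gamma|_K$ is wild). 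A valuation argument forces $\Delta\in\OO_{M_0}$, and reducing modulo $\M_{M_0}$ and using $\lambda^4+\lambda\equiv0$ on $\F_4$ improves this to $\bar\gamma|_K(\pi_K)\equiv\pi_K\pmod{\M_K^3}$, consistent with Proposition~\ref{Maut}. Writing $\bar\gamma|_K(\pi_K)=\pi_K+r\pi_K^3+\cdots$ and analyzing the next order of $\Delta^4+\Delta=r\pi_K+\cdots$ yields $\Delta\in\F_4+\M_{M_0}^4$, i.e., $\bar\gamma(Y)=Y+j+\xi$ with $\xi\in\M_{M_0}^4$ as claimed.

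To finish, if $\alpha_3\in L_0$ is a root of $X^2-X-Y^3$, then $\gamma(\alpha_3)\in L_s$ generates $L_s$ over $M_0$ and satisfies $\wp(\gamma(\alpha_3))=\bar\gamma(Y)^3$, so Lemma~\ref{2diff} gives $\bar\gamma(Y)^3-(Y^3+s)\in\wp(M_0)$. The key algebraic identity is $jY^2=\wp(j^2Y)+j^2Y$, valid because $j^4=j$ in $\F_4$; it implies $jY^2+j^2Y\equiv0\pmod{\wp(M_0)}$, so a characteristic-$2$ expansion gives
\[(Y+j+\xi)^3-Y^3-s\equiv j^3-s\pmod{\wp(M_0)+\M_{M_0}^2}.\]
But the left-hand side lies in $\wp(M_0)\cap\OO_{M_0}=\F_2+\M_{M_0}$, so reducing mod $\M_{M_0}$ forces $j^3-s\in\F_2$; since $j^3\in\{0,1\}\subset\F_2$ and $s\notin\F_2$, this is impossible. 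The main obstacle I anticipate is the shape-of-$\bar\gamma(Y)$ argument, where controlling $\Delta$ precisely enough to place $\xi$ in $\M_{M_0}^4$ requires the vanishing of $\lambda^4+\lambda$ on $\F_4$.
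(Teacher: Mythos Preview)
Your argument is correct and follows essentially the same route as the paper. Parts (a) and (b) match the paper's proof almost verbatim: reduce $\kappa$ modulo $\wp(M_0)\cap K$ using Artin--Schreier adjustments, invoke Proposition~\ref{Qram}(c) to rule out odd poles $\ge3$, and use $\OO_K/(\F_2+\M_K)\cong\F_4/\F_2$ to land on $\{0,s\}$; then Proposition~\ref{M0} for (b).

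In part (c) the skeleton is the same (show $\gamma(M_0)=M_0$, control $\gamma(Y)-Y$, derive $s\in\F_2$), but the bookkeeping differs. The paper treats $a_2\ne0$ and $a_2=0$ as separate cases (the first yielding an unramified subextension of $M_0/K$), and in the second case it absorbs the $\F_4$-constant by choosing a particular root $Y'$ with $Y'-Y\in\M_K$; then $(Y')^3\equiv Y^3\pmod{\M_{M_0}}$ forces $L_0=L_s$, whence $L_0$ contains a root of $X^2-X-s$, a contradiction. Your version is more uniform: you show $a_2=0$ directly via the residue identity $\lambda^4+\lambda=0$ on $\F_4$, keep the constant $j$ explicit, and neutralize it with the Artin--Schreier identity $jY^2+j^2Y=\wp(j^2Y)$. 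The paper's endgame is slightly slicker; yours is more hands-on but equally valid.

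One phrasing slip to fix: when you write ``the left-hand side lies in $\wp(M_0)\cap\OO_{M_0}$'', the quantity $(Y+j+\xi)^3-Y^3-s$ is \emph{not} in $\OO_{M_0}$ when $j\ne0$ (the $jY^2$ term has valuation $-2$). What you want to say is that, after subtracting $\wp(j^2Y)\in\wp(M_0)$, the remainder $(j^3-s)+(\text{terms in }\M_{M_0}^2)$ lies in $\wp(M_0)\cap\OO_{M_0}=\F_2+\M_{M_0}$, whence $j^3-s\in\F_2$ and you have your contradiction.
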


\begin{proof}
(a) Let $\delta\in\{0,s\}$.  Then by
Proposition~\ref{Qram} $L_{\delta}/K$ is a
$Q_8$-extension with lower ramification breaks $1,1,3$
and upper ramification breaks $1,1,\frac32$.  Now let
$L/M_0$ be an extension such that $L/K$ is a
$Q_8$-extensions with lower ramification breaks $1,1,3$.
It follows from Proposition~\ref{Qram}(a) that
$L=L_{\kappa}$ for some $\kappa\in K$.  Assume without
loss of generality that $\kappa$ is chosen so that
\begin{equation} \label{max}
v_K(\kappa)
=\max\{v_K(\kappa'):\kappa'\in(\kappa+\wp(M_0))\cap K\}.
\end{equation}
Suppose $v_K(\kappa)<0$.  If $2\mid v_K(\kappa)$ then
there is $\lambda\in K$ such that
$v_K(\lambda)=\frac12v_K(\kappa)$ and
$v_K(\kappa+\wp(\lambda))>v_K(\kappa)$.  This
contradicts (\ref{max}), so we have $2\nmid v_K(\kappa)$.
If $v_K(\kappa)=-r$ with $r\ge3$ and $2\nmid r$ then it
follows from Proposition~\ref{Qram}(c) that $r$ is an
upper ramification break of $L/K$, contrary to
assumption.  If $v_K(\kappa)=-1$ then $\kappa\equiv
c_1s\pi_K^{-1}+c_2s^2\pi_K^{-1}\pmod{\OO_K}$ for some
$c_1,c_2\in\F_2$.  Then
$\kappa+\wp(c_1\alpha_1+c_2\alpha_2)\in\OO_K$, again
contradicting (\ref{max}).  Hence we have
$\kappa\in\OO_K$.  Since $0,s$ are coset representatives
for $\OO_K/\wp(\OO_K)$ we can take $\kappa\in\{0,s\}$.
\\[\medskipamount]
(b) By Proposition~\ref{M0} $(L/K,\pi_K+\M_K^2)$ is
$\C_{\F_4}$-isomorphic to $(L'/K,\pi_K+\M_K^2)$ for some
totally ramified $Q_8$-extension $L'/K$ such that
$M_0\subset L'$.  Since $L/K$ has lower ramification
breaks 1,1,3, so does $L'/K$.  Hence by (a), $L'$ must
be one of $L_0,L_s$.
\\[\medskipamount]
(c) Suppose
\[\gamma:(L_0/K,\pi_K+\M_K^2)\lra(L_s/K,\pi_K+\M_K^2)\]
is a $\C_{\F_4}$-isomorphism.  Then
$\gamma:L_0\ra L_s$ is a field isomorphism which
restricts to a wild automorphism of $K$.
Furthermore, since $M_0/K$
is the unique $(C_2\times C_2)$-subextension of both
$L_0/K$ and $L_s/K$ we have $\gamma(M_0)=M_0$.  Write
$\gamma(\pi_K)=\pi_K+a_2\pi_K^2+a_3\pi_K^3+\cdots$ with
$a_i\in\F_4$.  Then $M_0=\gamma(M_0)=K(Y')$, with
\[(Y')^4-Y'=\frac{1}{\gamma(\pi_K)}
=\frac{1}{\pi_K+a_2\pi_K^2+a_3\pi_K^3+\cdots}
=\pi_K^{-1}(1+a_2\pi_K+(a_2^2+a_3)\pi_K^2+\cdots).\]
Hence $M_0$ contains the root $Y'-Y$ of
\[X^4-X=\gamma(\pi_K)^{-1}-\pi_K^{-1}
=a_2+(a_2^2+a_3)\pi_K+\cdots.\]
Suppose $a_2\not=0$.  Since $\wp(\wp(\M_K))=\M_K$ this
implies that $M_0/K$ has a nontrivial unramified
subextension, a contradiction.  Suppose $a_2=0$.  Then
we can assume that $Y'=Y+\epsilon$ for some
$\epsilon\in\M_K$.  Therefore
$(Y')^3\equiv Y^3\pmod{\M_{M_0}}$.  Since $L_0$ is
generated over $M_0$ by a root of $X^2-X-Y^3$, and $L_s$
is generated over $M_0$ by a root of $X^2-X-(Y')^3$,
this implies $L_0=L_s$.  Hence $L_0$ contains a root of
$X^2-X-s$, another contradiction.
\end{proof}

     By applying Proposition~\ref{equiv} we get the
following:

\begin{cor} \label{Qgroups}
There are two conjugacy classes of subgroups of
$\N(\F_4)$ which are isomorphic to $Q_8$ and have lower
ramification breaks 1,1,3.
\end{cor}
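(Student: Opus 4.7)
The plan is simply to combine Proposition \ref{equiv} with Theorem \ref{embedQ}. First I would fix a specific local field $K$ of characteristic 2 with residue field $\F_4$ together with a uniformizer $\pi_K$, and apply Proposition \ref{equiv} to translate the question: conjugacy classes of finite subgroups of $\N(\F_4)$ are in bijection with $\C_{\F_4}$-isomorphism classes of objects in $\C_{K,\pi_K}$. Because a $\C_{\F_4}$-morphism preserves Galois groups and their ramification filtrations (as noted immediately after the definition of $\C_k$-morphisms), the bijection restricts to one between conjugacy classes of $Q_8$-subgroups of $\N(\F_4)$ with lower breaks $1,1,3$ and $\C_{\F_4}$-isomorphism classes of objects $(L/K,\pi_K+\M_K^2)\in\C_{K,\pi_K}$ for which $L/K$ is a totally ramified $Q_8$-extension with lower breaks $1,1,3$.

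Next I would read off the count of the latter classes from Theorem \ref{embedQ}: part (b) gives that every such object is $\C_{\F_4}$-isomorphic to $(L_0/K,\pi_K+\M_K^2)$ or $(L_s/K,\pi_K+\M_K^2)$, providing the upper bound of two classes, while part (c) gives that these two objects are not $\C_{\F_4}$-isomorphic, providing the lower bound. Combining the bijection of Proposition \ref{equiv} with the equality of counts yields exactly two conjugacy classes.

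There is no real obstacle here, since all of the substantive work has been done. The corollary is purely a dictionary translation via Proposition \ref{equiv}; the only thing to be careful about is confirming that the invariants defining the two sides of the correspondence (isomorphism type of the Galois group and its lower ramification breaks) are preserved by $\C_{\F_4}$-morphisms, which was already observed in Section 4.
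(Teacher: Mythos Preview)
Your proposal is correct and matches the paper's own approach exactly: the paper states the corollary with the single line ``By applying Proposition~\ref{equiv} we get the following,'' leaving the reader to combine Proposition~\ref{equiv} with Theorem~\ref{embedQ}(b)(c) precisely as you describe. Your added remark about why the bijection respects the Galois group and ramification breaks is a helpful clarification but not a departure from the paper's argument.
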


\section{Quaternion subgroups of $\N(\F_4)$}
\label{Qsub}

In this section we use the $Q_8$-extensions constructed
in Section~\ref{Qext} to give explicit descriptions of
subgroups $\QQ_8^0$, $\QQ_8^s$ of $\N(\F_4)$ in terms of
finite automata.  These subgroups are representatives
of the two conjugacy classes of minimally ramified
$Q_8$-subgroups of $\N(\F_4)$ given by
Corollary~\ref{Qgroups}.

     Let $L_0/K$ be the $Q_8$-extension constructed in
Theorem~\ref{embedQ}(a).  Then $L_0=M_0(\alpha_3)$,
where $\alpha_3$ is a root of $X^2-X-Y^3$.  Since
$v_{M_0}(Y^3)=-3$, it follows from Lemma~\ref{ASval}
that $v_{L_0}(\alpha_3)=-3$.  Set $t=Y\alpha_3^{-1}$.
Then $v_{L_0}(t)=-2-(-3)=1$, so $t$ is a uniformizer for
$L_0$.  We have
$\norm_{L_0/M_0}(t)=Y^2(Y^3)^{-1}=Y^{-1}$, and hence
$\norm_{L_0/K}(t)=\norm_{M_0/K}(Y^{-1})=\pi_K$.
It follows that the group
$\QQ_8^0=\{\sigma(t):\sigma\in\Gal(L_0/K)\}$ is a
representative of the conjugacy class of subgroups of
$\N(\F_4)$ which corresponds under
Proposition~\ref{equiv} to the object
$(L_0/K,\pi_K+\M_K^2)\in\C_{K,\pi_K}$.

     It follows from the definitions that $Y$,
$\alpha_3$, and $t$ are algebraic over the function
field $\F_4(\pi_K)$.  By applying $\sigma_1,\sigma_2$ to
the equation $\alpha_3t=Y$, and using (\ref{sig1a3Q}),
(\ref{sig2a3Q}), we get
\begin{align*}
(\alpha_3+s^2Y+s^2)\sigma_1(t)&=Y+s \\
(\alpha_3+sY+s)\sigma_2(t)&=Y+s^2.
\end{align*}
Therefore $\sigma_1(t),\sigma_2(t)$ are also algebraic
over $\F_4(\pi_K)$.  It follows that for $i=1,2$ there
is a polynomial relation between $t$ and $\sigma_i(t)$
defined over $\F_4$.  To determine these relations we
view $Y,\alpha_3,t$ as variables and let $X$ represent
$\sigma_i(t)$.  Consider the following ideals in the
polynomial ring $\F_4[Y,\alpha_3,t,X]$, which encode
relations between elements of $L_0$:
\begin{align*}
J_1&=(\alpha_3^2-\alpha_3-Y^3,t\alpha_3-Y,
(\alpha_3+s^2Y+s^2)X-Y-s) \\
J_2&=(\alpha_3^2-\alpha_3-Y^3,t\alpha_3-Y,
(\alpha_3+sY+s)X-Y-s^2).
\end{align*}
Using Magma \cite{magma}, we compute a Gr\"obner basis
for $J_i$ with a term order which eliminates $Y$ and
$\alpha_3$.  The Gr\"obner basis includes a polynomial
$g_{\sigma_i}(t,X)$ which does not depend on $Y$ or
$\alpha_3$ and has $X=\sigma_i(t)$ as a root.  The
polynomial $g_{\sigma_i}(t,X)$ factors nontrivially, but
it has a unique irreducible factor $f_{\sigma_i}(t,X)$
which admits a root of the form $X=t+a_2t^2+\cdots$ in
$\F_4[[t]]$.  

     We find that $X=\sigma_i(t)$ is a root of
$f_{\sigma_i}(t,X)$, where
\begin{align*}
f_{\sigma_1}(t,X)&=(t^2+1)X^2+X+st^2+t \\
f_{\sigma_2}(t,X)&=(t^2+1)X^2+X+s^2t^2+t.
\end{align*}
Since $f_{\sigma_i}(0,0)=0$ and
$\frac{\partial f_{\sigma_i}}{\partial X}(0,0)\not=0$,
the polynomial $f_{\sigma_i}(t,X)$ is ``non-singular''
in the sense of Section 3.2.3 of \cite{BCT}.  This
allows us to use the algorithm given there to compute an
automaton which determines the coefficients of
$\sigma_i(t)$.  This algorithm, which we have
implemented in Magma, is based on Furstenberg's theorem
\cite{Furst}.  Since $f_{\sigma_2}(t,X)$ comes from
$f_{\sigma_1}(t,X)$ by applying the 2-Frobenius to
$\F_4$, the automata for $\sigma_1(t)$ and $\sigma_2(t)$
have the same digraph and edge labels, and the state
labels for $\sigma_2(t)$ are the
$\Gal(\F_4/\F_2)$-conjugates of the state labels for
$\sigma_1(t)$:
\[\begin{array}{|c||c|c|c|c||c|c|}
\hline
\text{State}&0&1&2&3&\sigma_1\text{ label}
&\sigma_2\text{ label} \\
\hline
1&2&3&4&5&0&0 \\
\hline
2&2&6&7&4&0&0 \\
\hline
3&3&5&5&5&1&1 \\
\hline
4&6&8&7&4&s^2&s \\
\hline
5&5&5&5&5&0&0 \\
\hline
6&6&6&7&4&s^2&s \\
\hline
7&8&8&7&4&s&s^2 \\
\hline
8&8&6&7&4&s&s^2 \\
\hline
\end{array}\]
A diagram of the automaton for $\sigma_1(t)$ is given in
Figure~\ref{aut1}.  Note that directed edges leading
from a state back to itself are not included in our
diagrams.

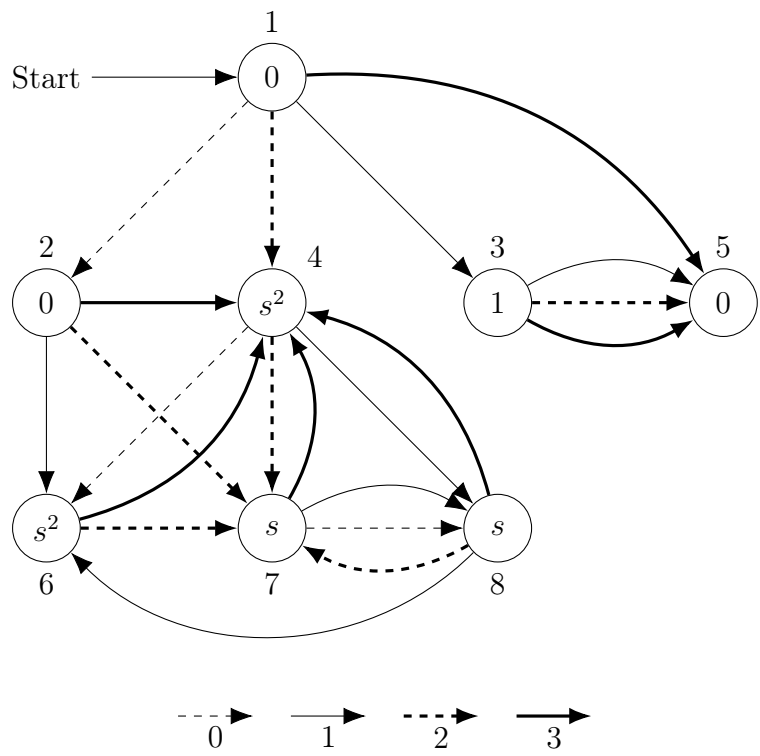
\begin{figure}
\caption{Automaton for $\sigma_1$}
\label{aut1}
\begin{center}
\begin{tikzpicture}[node distance = 3cm, auto]
\node[label={1},draw,circle,minimum width=9mm] at (0,0) (1) {$0$};
\node[left of=1] (0) {Start};
\node[label={above right:4},draw,circle,minimum width=9mm,below of=1] (4) {$s^2$};
\node[label={2},draw,circle,minimum width=9mm,left of=4] (2) {$0$};
\node[label={3},draw,circle,minimum width=9mm,right of=4] (3) {$1$};
\node[label={5},draw,circle,minimum width=9mm,right of=3] (5) {$0$};
\node[label={below:6},draw,circle,minimum width=9mm,below of=2] (6) {$s^2$};
\node[label={below:7},draw,circle,minimum width=9mm,right of=6] (7) {$s$};
\node[label={below:8},draw,circle,minimum width=9mm,right of=7] (8) {$s$};
\draw[-{Latex[length=3mm]}] (0) to (1);
\draw[-{Latex[length=3mm]},dashed,very thick] (1) to (4);
\draw[-{Latex[length=3mm]},dashed,thin] (1) to (2);
\draw[-{Latex[length=3mm]},thin] (1) to (3);
\draw[-{Latex[length=3mm]},bend left,very thick] (1) to (5);
\draw[-{Latex[length=3mm]},thin] (4) to (8);
\draw[-{Latex[length=3mm]}] (2) to (6);
\draw[-{Latex[length=3mm]},dashed,very thick] (2) to (7);
\draw[-{Latex[length=3mm]},bend left=45,thin] (8) to (6);
\draw[-{Latex[length=3mm]},bend left,thin] (3) to (5);
\draw[-{Latex[length=3mm]},bend right,very thick] (3) to (5);
\draw[-{Latex[length=3mm]},dashed,very thick] (3) to (5);
\draw[-{Latex[length=3mm]},very thick] (2) to (4);
\draw[-{Latex[length=3mm]},bend right,very thick] (6) to (4);
\draw[-{Latex[length=3mm]},bend right,very thick] (7) to (4);
\draw[-{Latex[length=3mm]},bend right,very thick] (8) to (4);
\draw[-{Latex[length=3mm]},dashed,very thick] (4) to (7);
\draw[-{Latex[length=3mm]},dashed,very thick,bend left] (8) to (7);
\draw[-{Latex[length=3mm]},dashed,very thick] (6) to (7);
\draw[-{Latex[length=3mm]},dashed,thin] (4) to (6);
\draw[-{Latex[length=3mm]},dashed,thin] (7) to (8);
\draw[-{Latex[length=3mm]},bend left,thin] (7) to (8);
\draw[-{Latex[length=3mm]},dashed,thin] (-1.25,-8.5) to
node[swap]{0} (-.25,-8.5);
\draw[-{Latex[length=3mm]},thin] (.25,-8.5) to
node[swap]{1} (1.25,-8.5);
\draw[-{Latex[length=3mm]},dashed,very thick] (1.75,-8.5) to
node[swap]{2} (2.75,-8.5);
\draw[-{Latex[length=3mm]},very thick] (3.25,-8.5) to
node[swap]{3} (4.25,-8.5);
\end{tikzpicture}
\end{center}
\end{figure}

     Let $\sigma_0=\sigma_1\sigma_2$.  Automata for
$\sigma_1^3(t)$, $\sigma_2^3(t)$, $\sigma_0(t)$,
$\sigma_0^3(t)$, and $\sigma_0^2(t)$ can also be
computed.  As above we use Gr\"obner bases to get
polynomials
\begin{align*}
f_{\sigma_1^3}(t,X)&=(t^2+s)X^2+X+t^2+t \\
f_{\sigma_2^3}(t,X)&=(t^2+s^2)X^2+X+t^2+t \\
f_{\sigma_0}(t,X)&=(t^2+s^2)X^2+X+st^2+t \\
f_{\sigma_0^3}(t,X)&=(t^2+s)X^2+X+s^2t^2+t \\
f_{\sigma_0^2}(t,X)&=t^2X^2+X+t
\end{align*}
which have the specified series as roots.  All these
polynomials are non-singular, so we can apply the
algorithm in Section~3.2.3 of \cite{BCT} in each case.
The automata for $\sigma_1^3(t)$, $\sigma_2^3(t)$,
$\sigma_0(t)$, and $\sigma_0^3(t)$ all have the same
graph and the same edge labels, but different state
labels:
\[\begin{array}{|c||c|c|c|c||c|c|c|c|c|c|}
\hline
\text{State}&0&1&2&3&\sigma_1^3\text{ label}
&\sigma_2^3\text{ label}&\sigma_0\text{ label}
&\sigma_0^3\text{ label} \\
\hline
1&2&3&4&5&0&0&0&0 \\
\hline
2&2&6&7&8&0&0&0&0 \\
\hline
3&3&5&5&5&1&1&1&1 \\
\hline
4&9&10&11&4&s^2&s&1&1 \\
\hline
5&5&5&5&5&0&0&0&0 \\
\hline
6&6&9&12&13&s&s^2&s&s^2 \\
\hline
7&14&10&11&4&1&1&s&s^2 \\
\hline
8&15&16&7&8&1&1&s^2&s \\
\hline
9&9&15&11&4&s^2&s&1&1 \\
\hline
10&10&6&7&8&s&s^2&1&1 \\
\hline
11&16&14&12&13&s^2&s&s^2&s \\
\hline
12&10&16&7&8&s&s^2&1&1 \\
\hline
13&6&14&12&13&s&s^2&s&s^2 \\
\hline
14&14&15&11&4&1&1&s&s^2 \\
\hline
15&15&6&7&8&1&1&s^2&s \\
\hline
16&16&9&12&13&s^2&s&s^2&s \\
\hline
\end{array}\]
A diagram of the automaton for $\sigma_1^3(t)$ is
given in Figure~\ref{aut13}.
\begin{figure}
\caption{Automaton for $\sigma_1^3$}
\label{aut13}
\begin{center}
\begin{tikzpicture}[auto]
\node (0) at (-2,-10) {Start};
\node[draw,circle,minimum width=9mm] at (0,-8) (1) {$0$};
\node[draw,circle,minimum width=9mm] at (-4,-8) (2) {$0$};
\node[draw,circle,minimum width=9mm] at (4,-8) (3) {$1$};
\node[draw,circle,minimum width=9mm] at (8,-4) (4) {$s^2$};
\node[draw,circle,minimum width=9mm] at (8,-8) (5) {$0$};
\node[draw,circle,minimum width=9mm] at (-4,-4) (6) {$s$};
\node[draw,circle,minimum width=9mm] at (0,0) (7) {$1$};
\node[draw,circle,minimum width=9mm] at (4,4) (8) {$1$};
\node[draw,circle,minimum width=9mm] at (4,-4) (9) {$s^2$};
\node[draw,circle,minimum width=9mm] at (-4,0) (10) {$s$};
\node[draw,circle,minimum width=9mm] at (4,0) (11) {$s^2$};
\node[draw,circle,minimum width=9mm] at (0,4) (12) {$s$};
\node[draw,circle,minimum width=9mm] at (-4,4) (13) {$s$};
\node[draw,circle,minimum width=9mm] at (8,0) (14) {$1$};
\node[draw,circle,minimum width=9mm] at (0,-4) (15) {$1$};
\node[draw,circle,minimum width=9mm] at (8,4) (16) {$s^2$};
\draw[-{Latex[length=3mm]}] (0) to (1);
\draw[-{Latex[length=3mm]},dashed,thin] (1) to (2);
\draw[-{Latex[length=3mm]},dashed,thin,bend left=15] (4) to (9);
\draw[-{Latex[length=3mm]},dashed,thin,bend right=20] (7) to (14);
\draw[-{Latex[length=3mm]},dashed,thin] (8) to (15);
\draw[-{Latex[length=3mm]},dashed,thin] (11) to (16);
\draw[-{Latex[length=3mm]},dashed,thin] (12) to (10);
\draw[-{Latex[length=3mm]},dashed,thin,bend left=25] (13) to (6);
\draw[-{Latex[length=3mm]},thin] (1) to (3);
\draw[-{Latex[length=3mm]},thin] (2) to (6);
\draw[-{Latex[length=3mm]},thin,bend right] (3) to (5);
\draw[-{Latex[length=3mm]},thin] (4) to (10);
\draw[-{Latex[length=3mm]},thin,bend right] (6) to (9);
\draw[-{Latex[length=3mm]},thin] (7) to (10);
\draw[-{Latex[length=3mm]},thin] (8) to (16);
\draw[-{Latex[length=3mm]},thin] (9) to (15);
\draw[-{Latex[length=3mm]},thin] (10) to (6);
\draw[-{Latex[length=3mm]},thin] (11) to (14);
\draw[-{Latex[length=3mm]},thin,bend left=35] (12) to (16);
\draw[-{Latex[length=3mm]},thin,bend left=10] (13) to (14);
\draw[-{Latex[length=3mm]},thin,bend left=10] (14) to (15);
\draw[-{Latex[length=3mm]},thin] (15) to (6);
\draw[-{Latex[length=3mm]},thin] (16) to (9);
\draw[-{Latex[length=3mm]},dashed,very thick] (1) to (4);
\draw[-{Latex[length=3mm]},dashed,very thick] (2) to (7);
\draw[-{Latex[length=3mm]},dashed,very thick] (3) to (5);
\draw[-{Latex[length=3mm]},dashed,very thick] (4) to (11);
\draw[-{Latex[length=3mm]},dashed,very thick,bend right=10] (6) to (12);
\draw[-{Latex[length=3mm]},dashed,very thick] (7) to (11);
\draw[-{Latex[length=3mm]},dashed,very thick] (8) to (7);
\draw[-{Latex[length=3mm]},dashed,very thick] (9) to (11);
\draw[-{Latex[length=3mm]},dashed,very thick,bend left=15] (10) to (7);
\draw[-{Latex[length=3mm]},dashed,very thick] (11) to (12);
\draw[-{Latex[length=3mm]},dashed,very thick] (12) to (7);
\draw[-{Latex[length=3mm]},dashed,very thick] (13) to (12);
\draw[-{Latex[length=3mm]},dashed,very thick,bend right=15] (14) to (11);
\draw[-{Latex[length=3mm]},dashed,very thick] (15) to (7);
\draw[-{Latex[length=3mm]},dashed,very thick,bend right=20] (16) to (12);
\draw[-{Latex[length=3mm]},very thick,bend right=45] (1) to (5);
\draw[-{Latex[length=3mm]},very thick] (2) to (8);
\draw[-{Latex[length=3mm]},very thick,bend left] (3) to (5);
\draw[-{Latex[length=3mm]},very thick,bend left] (6) to (13);
\draw[-{Latex[length=3mm]},very thick] (7) to (4);
\draw[-{Latex[length=3mm]},very thick] (9) to (4);
\draw[-{Latex[length=3mm]},very thick] (10) to (8);
\draw[-{Latex[length=3mm]},very thick] (11) to (13);
\draw[-{Latex[length=3mm]},very thick] (12) to (8);
\draw[-{Latex[length=3mm]},very thick] (14) to (4);
\draw[-{Latex[length=3mm]},very thick,bend right=15] (15) to (8);
\draw[-{Latex[length=3mm]},very thick,bend right=45] (16) to (13);
\draw[-{Latex[length=3mm]},dashed,thin] (-1.25,-11) to
node[swap]{0} (-.25,-11);
\draw[-{Latex[length=3mm]},thin] (.25,-11) to
node[swap]{1} (1.25,-11);
\draw[-{Latex[length=3mm]},dashed,very thick] (1.75,-11) to
node[swap]{2} (2.75,-11);
\draw[-{Latex[length=3mm]},very thick] (3.25,-11) to
node[swap]{3} (4.25,-11);
\end{tikzpicture}
\end{center}
\end{figure}
The automaton for $\sigma_1^2(t)$ is given by the
following table, and is diagrammed in
Figure~\ref{aut12}.
\[\begin{array}{|c||c|c|c|c||c|}
\hline
\text{State}&0&1&2&3&\sigma_1^2\text{ label} \\
\hline
1&2&3&4&5&0 \\
\hline
2&5&3&5&5&0 \\
\hline
3&3&5&5&5&1 \\
\hline
4&5&2&3&4&0 \\
\hline
5&5&5&5&5&0 \\
\hline
\end{array}\]

\begin{figure}
\caption{Automaton for $\sigma_1^2$}
\label{aut12}
\begin{center}
\begin{tikzpicture}[node distance = 3cm, auto]
\node[label={above right:1},draw,circle,minimum width=9mm] at (0,0) (1) {$0$};
\node[left of=1] (0) {Start};
\node[label={below:5},draw,circle,minimum width=9mm,below of=1] (5) {$0$};
\node[label={below:3},draw,circle,minimum width=9mm,left of=5] (3) {$1$};
\node[label={below:2},draw,circle,minimum width=9mm,right of=5] (2) {$0$};
\node[label={below:4},draw,circle,minimum width=9mm,right of=2] (4) {$0$};
\draw[-{Latex[length=3mm]}] (0) to (1);
\draw[-{Latex[length=3mm]},dashed,thin] (1) to (2);
\draw[-{Latex[length=3mm]},thin] (1) to (3);
\draw[-{Latex[length=3mm]},bend left,dashed,very thick] (1) to (4);
\draw[-{Latex[length=3mm]},very thick] (1) to (5);
\draw[-{Latex[length=3mm]},bend right,dashed,thin] (2) to (5);
\draw[-{Latex[length=3mm]},very thick] (2) to (5);
\draw[-{Latex[length=3mm]},bend left,dashed,very thick] (2) to (5);
\draw[-{Latex[length=3mm]},bend left=45,thin] (2) to (3);
\draw[-{Latex[length=3mm]},bend left,thin] (3) to (5);
\draw[-{Latex[length=3mm]},bend right,dashed,very thick] (3) to (5);
\draw[-{Latex[length=3mm]},very thick] (3) to (5);
\draw[-{Latex[length=3mm]},bend right=45,dashed,thin] (4) to (5);
\draw[-{Latex[length=3mm]},thin] (4) to (2);
\draw[-{Latex[length=3mm]},bend left=60,dashed,very thick] (4) to (3);
\draw[-{Latex[length=3mm]},dashed,thin] (-1.25,-6.5) to
node[swap]{0} (-.25,-6.5);
\draw[-{Latex[length=3mm]},thin] (.25,-6.5) to
node[swap]{1} (1.25,-6.5);
\draw[-{Latex[length=3mm]},dashed,very thick] (1.75,-6.5) to
node[swap]{2} (2.75,-6.5);
\draw[-{Latex[length=3mm]},very thick] (3.25,-6.5) to
node[swap]{3} (4.25,-6.5);
\end{tikzpicture}
\end{center}
\end{figure}

     We now describe a subgroup $\QQ_8^s$ of $\N(\F_4)$
which corresponds to the $Q_8$-extension $L_s/K$ from
Proposition~\ref{embedQ}.  Rather than generating $L_s$
over $M_0$ with a root $\alpha_3$ of $X^2-X-Y^3-s$, we
let $\alphah_3$ be a root of $X^2-X-Y^3-sY^2-s^2Y-s$.
Since $sY^2+s^2Y=\wp(s^2Y)$ we can take
$\alphah_3=\alpha_3+s^2Y$; thus $L_s=M_0(\alphah_3)$.
This choice of generator for $L_s$ leads to a subgroup
of $\N(\F_4)$ which is generated by elements
represented by automata with relatively small numbers of
states.  Using (\ref{sig1a3Q}) and (\ref{sig2a3Q}) we
extend $\sigmab_1,\sigmab_2$ to elements of
$\Gal(L_s/K)$ such that
$\sigma_1(\alphah_3)=\alphah_3+s^2Y+s$ and
$\sigma_2(\alphah_3)=\alphah_3+sY$.  Also set
$\sigma_0=\sigma_1\sigma_2$; then
$\sigma_0(\alphah_3)=\alphah_3+Y+1$.  Let
$t=Y\alphah_3^{-1}$; then $t$ is a uniformizer for
$L_s$.  We have $\norm_{L_s/M_0}(t)=Y^2(Y^3+s)^{-1}
\equiv Y^{-1}\pmod{\M_{M_0}^2}$, and hence
$\norm_{L_s/K}(t)\equiv\norm_{M_0/K}(Y^{-1})\equiv\pi_K
\pmod{\M_K^2}$.  Therefore the group
$\QQ_8^s=\{\sigma(t):\sigma\in\Gal(L_0/K)\}$ is a
representative of the conjugacy class of subgroups of
$\N(\F_4)$ which corresponds to 
$(L_s/K,\pi_K+\M_K^2)\in\C_{K,\pi_K}$.

     As above, for each of the series $\sigma(t)$
corresponding to the nonidentity elements of $Q_8$ we
use Gr\"{o}bner bases to compute irreducible polynomials
$f_{\sigma}(t,X)\in\F_4[t,X]$ such that $X=\sigma(t)$ is
a root of $f_{\sigma}(t,X)$.  For instance,
$f_{\sigma_1},f_{\sigma_2}$ are elements of Gr\"{o}bner
bases for the ideals
\begin{align*}
J_1&=(\alphah_3^2-\alphah_3-Y^3-sY^2-s^2Y-s,
t\alpha_3-Y,(\alpha_3+s^2Y+s)X-Y-s) \\
J_2&=(\alphah_3^2-\alphah_3-Y^3-sY^2-s^2Y-s,
t\alpha_3-Y,(\alpha_3+sY)X-Y-s^2)
\end{align*}
in the polynomial ring $\F_4[Y,\alphah_3,t,X]$.  We find
that
\begin{align*}
f_{\sigma_1}&=(t^3 + s^2t^2 + t + s)X^3 + (s^2t^3 + st^2
+ 1)X^2 + (t^3 + s^2t^2 + t + s)X + st^3 + t^2 + st \\
f_{\sigma_1^3}&=(t^3 + s^2t^2 + t + s)X^3 + (s^2t^3 +
st^2 + s^2t + 1)X^2 + (t^3 + t + s)X + st^3 + t^2 + st \\
f_{\sigma_2}&=(t^3 + s^2t^2 + st + 1)X^3 + (t^2 + s)X^2
+ (t^3 + s^2t^2 + st + 1)X + t^3 + st^2 + t \\
f_{\sigma_2^3}&=(t^3 + t + 1)X^3 + (s^2t^3 + t^2 + s^2t
+ s)X^2 + (st^3 + st + 1)X + t^3 + st^2 + t \\
f_{\sigma_0}&=(t^3 + s^2t^2 + 1)X^3 + (t^3 + s^2t^2 +
s^2)X^2 + (t^3 + s^2t^2 + s^2t + s^2)X + t^3 + s^2t^2 + s^2t \\
f_{\sigma_0^3}&=(t^3 + t^2 + t + 1)X^3 + (s^2t^3 + s^2t^2
+ s^2t + s^2)X^2 + (s^2t + s^2)X + t^3 + s^2t^2 + s^2t \\
f_{\sigma_0^2}&=(t^3+st^2+s^2t+s)X^3 + (st^3+s^2t)X^2 +
(s^2t^3+s^2t^2)X + st^3.
\end{align*}

     The first six of these polynomials are nonsingular,
so the corresponding automata can be computed using the
algorithm in Section~3.2.3 of \cite{BCT}.  We get 1773
states for $\sigma_1(t)$, 36 states for $\sigma_1^3(t)$,
263 states for $\sigma_2(t)$, 203 states for
$\sigma_2^3(t)$, 595 states for $\sigma_0(t)$, and 95
states for $\sigma_0^3(t)$.  Since $f_{\sigma_0^2}$ is
singular we use the algorithm in Section~3.2.2 of
\cite{BCT} to compute an automaton for $\sigma_0^2(t)$.
This algorithm, which is based on Ore polynomials, gives
an automaton with 1768 states.  The
automata for $\sigma_1^3(t)$ and $\sigma_0^3(t)$ are
given in Appendices~\ref{sig2p} and \ref{sig3p}.
Together, these two elements generate the subgroup
$\QQ_8^s$ of $\N(\F_4)$.

\begin{remark}
Since the composita $L_0\F_{16}$, $L_s\F_{16}$ are
equal, our $Q_8$-subgroups $\QQ_8^0,\QQ_8^s$ of
$\N(\F_4)$ are conjugate in $\N(\F_{16})$.  Therefore it
may seem surprising that the automata associated to
$L_s/K$ are so much more complicated than those
associated to $L_0/K$.
\end{remark}

\section{Dihedral extensions} \label{Dext}

Let $K$ be a local field of characteristic 2 with
residue field $\F_4$ and let $\pi_K$ be a uniformizer
for $K$.  In this section we classify isomorphism
classes of objects $(L/K,\pi_K+\M_K^2)\in\C_{K,\pi_K}$ such
that $L/K$ is a $D_4$-extension with lower ramification
breaks 1,1,5.  Once again we assume that all extensions
of $K$ are contained in the algebraic closure $\Kb$ of
$K$.

     We begin by observing that 1,1,5 are the smallest
possible lower breaks for a totally ramified
$D_4$-extension.

\begin{lemma}
Let $L/K$ be a totally ramified Galois extension such
that $\Gal(L/K)\cong D_4$.  Let $b_1\le b_2\le b_3$ be
the lower ramification breaks of $L/K$ and let
$u_1\le u_2\le u_3$ be the upper ramification breaks.
Then $b_1\ge1$, $b_2\ge1$, $b_3\ge5$, $u_1\ge1$,
$u_2\ge1$, and $u_3\ge2$.
\end{lemma}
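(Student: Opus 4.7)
The plan is to follow exactly the same pattern as the proof of the analogous bounds for $Q_8$. First, I would observe that every totally ramified Galois $p$-extension in equal characteristic $p$ is wildly ramified, so when $\Gal(L/K)$ is a $p$-group we have $G_1 = G$; in particular every $b_i \ge 1$, and since $u_1 = b_1$ and the upper breaks are nondecreasing, every $u_i \ge 1$ as well. This disposes of the easy inequalities.

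For $b_3 \ge 5$ and $u_3 \ge 2$, I would cite Theorem~1.6 of \cite{elder}, which, applied to $G \cong D_4$, produces exactly these bounds. Once $b_3 \ge 5$ is known, the estimate on $u_3$ can also be recovered directly from the conversion formula
\[
u_3 \;=\; \frac{b_1}{2} + \frac{b_2}{4} + \frac{b_3}{4}
\]
combined with $b_1, b_2 \ge 1$, which gives $u_3 \ge \tfrac{1}{2} + \tfrac{1}{4} + \tfrac{5}{4} = 2$.

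If one instead wanted a self-contained argument, the main obstacle would be the bound $b_3 \ge 5$. A natural route would go through the commutator and squaring calculus in $\N(\F_4)$: one has $[\A(k)_d,\A(k)_e] \subseteq \A(k)_{d+e}$, and in characteristic $2$ squaring an element of odd depth $d \ge 1$ produces an element whose depth strictly exceeds $2d$. Since $D_4$ has a unique normal subgroup of order $2$, the ramification subgroup $G_{b_3}$ must coincide with the center, which is generated by the square of any element $r$ of order $4$ in $G$; combining the commutator and squaring estimates while carefully tracking the leading coefficients when $r$ has depth $1$ ought to force $b_3 \ge 5$. Carrying this case analysis out explicitly for $D_4$ is the nontrivial step, which is presumably why the author opts to invoke Elder's theorem directly.
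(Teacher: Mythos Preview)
Your proposal is correct and follows essentially the same approach as the paper: the paper also notes that a totally ramified Galois $p$-extension has all ramification breaks $\ge 1$, and then invokes Theorem~1.6 of \cite{elder} for the bounds on $b_3$ and $u_3$. Your additional remarks (the explicit conversion formula and the sketch of a direct argument via commutator and squaring depths) go beyond what the paper provides, but the core proof is the same.
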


\begin{proof}
Since $L/K$ is a totally ramified Galois $p$-extension
we must have $b_i\ge1$ and $u_i\ge1$ for $i=1,2$.  The
bounds on $b_3$ and $u_3$ follow from Theorem~1.6 of
\cite{elder}.
\end{proof}

     Let $M_0/K$ be the totally ramified
$(C_2\times C_2)$-extension constructed in
Proposition~\ref{Vclass}, and let
$Y=s\alpha_1+s^2\alpha_2$ be the generator for $M_0$
over $K$ defined in Lemma~\ref{Y}.  Let
\[g(X)=X^5+X^4+X^3\in\F_4[X].\]
For $\zeta\in\F_4^{\times}$ and $\kappa\in K$ let
$L_{\zeta,\kappa}$ be the extension of $M_0$ generated
by the roots of $X^2-X-g(\zeta Y)-\kappa$.

\begin{prop} \label{Dram}
\begin{enumerate}[(a)]
\item Let $L/M_0$ be a finite extension.  Then $L/K$ is
a $D_4$-extension if and only if $L=L_{\zeta,\kappa}$
for some $\zeta\in\F_4^{\times}$ and $\kappa\in K$.
\item Let $\zeta\in\F_4^{\times}$ and $\kappa\in\OO_K$.
Then $L_{\zeta,\kappa}/K$ has lower ramification breaks
1,1,5 and upper ramification breaks 1,1,2.
\item Let $\zeta\in\F_4^{\times}$ and $\kappa\in K$.
Assume that $r:=-v_K(\kappa)$ satisfies $r\ge3$ and
$2\nmid r$.  Then $L_{\zeta,\kappa}/K$ has lower
ramification breaks $1,1,1+4(r-1)$ and upper
ramification breaks $1,1,r$.
\end{enumerate}
\end{prop}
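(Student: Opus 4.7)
The plan is to adapt the proof of Proposition \ref{Qram}, with $g(\zeta Y)$ playing the role that $Y^3$ plays there.

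For the forward direction of (a), I first establish the polynomial identity
\[g(Z+\eta)-g(Z)=\eta(Z^4+Z^2)+(\eta+\eta^2)Z+(\eta+\eta^2+\eta^3)\in\wp(\F_4[Z])\]
for every $\eta\in\F_4$; for example, $g(Z+s)-g(Z)=\wp(s^2Z^2+Z)$, with analogous identities for $\eta\in\{1,s^2\}$. Applying this with $Z=\zeta Y$ and $\eta\in\{\zeta s,\zeta s^2\}$ shows that $\sigmab_1,\sigmab_2$ each extend to automorphisms $\sigma_1,\sigma_2$ of $L_{\zeta,\kappa}$ of the form $\sigma_i(\alpha_3)=\alpha_3+h_i$ for explicit $h_i\in M_0$, where $\alpha_3$ is a root of $X^2-X-g(\zeta Y)-\kappa$. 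A direct computation of $\sigma_1^2,\sigma_2^2$, and $(\sigma_1\sigma_2)^2$, carried out for each $\zeta\in\F_4^{\times}$, then shows that two of $\sigma_1,\sigma_2,\sigma_1\sigma_2$ are involutions while the third has order $4$, giving $\Gal(L_{\zeta,\kappa}/K)\cong D_4$. One finds that the element of order $4$ depends on $\zeta$: for $\zeta=1,s,s^2$ its fixed field in $M_0$ is respectively $K(\alpha_1+\alpha_2)$, $K(\alpha_1)$, $K(\alpha_2)$, exhausting the three quadratic subextensions of $M_0/K$.

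For the reverse direction of (a), let $L/K$ be a $D_4$-extension containing $M_0$. Since the unique normal subgroup of order $2$ in $D_4$ is its center, $\Gal(L/M_0)=Z(\Gal(L/K))$, so the unique cyclic subgroup of order $4$ in $\Gal(L/K)$ fixes a quadratic subfield $F_L\subset M_0$. Choose $\zeta\in\F_4^{\times}$ so that the corresponding subfield for $L_{\zeta,0}$ equals $F_L$. Any abstract isomorphism $\Gal(L/K)\to\Gal(L_{\zeta,0}/K)$ carries $C_4$ to $C_4$; after post-composition with an automorphism of $\Gal(L_{\zeta,0}/K)$ if necessary, the induced automorphism of $\Gal(M_0/K)\cong D_4/Z$ becomes the identity, so we obtain $\theta$ satisfying the hypothesis of Lemma \ref{perturb}. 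Applying that lemma with $M=M_0$, $N=Z(\Gal(L/K))$, and $d=g(\zeta Y)$ yields $\kappa\in K$ with $L=L_{\zeta,\kappa}$.

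Parts (b) and (c) follow from the break structure of the $C_2$-extension $L_{\zeta,\kappa}/M_0$ combined with Proposition \ref{breakfacts}. For (b), $\kappa\in\OO_K$ gives $v_{M_0}(g(\zeta Y)+\kappa)=-5$, so Lemma \ref{ASbreak} makes $L_{\zeta,\kappa}/M_0$ a totally ramified $C_2$-extension with break $5$; then Proposition \ref{breakfacts}(a) supplies $5$ as a lower break of $L_{\zeta,\kappa}/K$, Proposition \ref{breakfacts}(b) supplies the other two from the upper breaks $1,1$ of $M_0/K$, and $u_3-u_2=(b_3-b_2)/4=1$ gives upper breaks $1,1,2$. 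For (c), as in the proof of Proposition \ref{Qram}(c), using $\pi_K^{-1}=Y^4-Y$ and the oddness of $r$, expand
\[\kappa\equiv c\pi_K^{-r}\equiv c(Y^4-Y)^r\equiv -cY^{4r-3}+\wp(c^2Y^{2r})\pmod{\M_{M_0}^{-4r+4}},\]
set $\kappa'=\kappa-\wp(c^2Y^{2r})$ so that $L_{\zeta,\kappa}=L_{\zeta,\kappa'}$ with $v_{M_0}(\kappa')=-(4r-3)<-5$; then $v_{M_0}(g(\zeta Y)+\kappa')$ is odd, giving $L_{\zeta,\kappa}/M_0$ break $1+4(r-1)$, and the breaks of $L_{\zeta,\kappa}/K$ follow as in (b).

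The main obstacle is the order computation in the forward direction of (a). Because the polynomial $g(\zeta Y)$ depends nontrivially on $\zeta$, the three cases $\zeta=1,s,s^2$ must be handled separately, and tracking which of $\sigma_1,\sigma_2,\sigma_1\sigma_2$ becomes the element of order $4$ is what establishes the bijection between $\F_4^{\times}$ and the quadratic subfields of $M_0/K$. Without this bijection, matching an arbitrary $D_4$-extension to a specific $L_{\zeta,0}$ in the reverse direction of (a) would not be possible.
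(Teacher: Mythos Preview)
Your proof is correct and follows essentially the same route as the paper. Both arguments handle (a) by explicitly lifting $\sigmab_1,\sigmab_2,\sigmab_0$ and checking orders for each $\zeta$ (the paper records the nine formulas (\ref{tau1})--(\ref{taus2}) directly, you package them via the identity $g(Z+\eta)-g(Z)\in\wp(\F_4[Z])$), then match an arbitrary $D_4$-extension to some $L_{\zeta,0}$ via the quadratic subfield fixed by the $C_4$ and adjust by a $D_4$-automorphism swapping the reflection cosets before invoking Lemma~\ref{perturb}; parts (b)--(c) are in both cases deferred to the argument of Proposition~\ref{Qram}.
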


\begin{proof}
(a) Let $\zeta\in\F_4^{\times}$, let $\kappa\in K$, and
let $\alpha_3^{\zeta,\kappa}$ be a root of
$X^2-X-(g(\zeta Y)+\kappa)$; then
$L_{\zeta,\kappa}=M_0(\alpha_3^{\zeta,\kappa})$.  Set
$\sigmab_0=\sigmab_1\sigmab_2$, where
$\sigmab_1,\sigmab_2$ are the generators of
$\Gal(M_0/K)$ defined in Section~\ref{elem}.  By
applying the methods used in the proof of
Proposition~\ref{Qram}(a) we can extend
$\sigmab_1,\sigmab_2,\sigmab_0\in\Gal(M_0/K)$ to
$\tau_1^{\zeta,\kappa},\tau_2^{\zeta,\kappa},
\tau_0^{\zeta,\kappa}\in\Gal(L_{\zeta,\kappa}/K)$ by
setting
\begin{alignat}{3}
\tau_1^{1,\kappa}(\alpha_3^{1,\kappa})
&=\alpha_3^{1,\kappa}+s^2Y^2+Y \nonumber \\
\tau_2^{1,\kappa}(\alpha_3^{1,\kappa})
&=\alpha_3^{1,\kappa}+sY^2+Y \label{tau1} \\
\tau_0^{1,\kappa}(\alpha_3^{1,\kappa})
&=\alpha_3^{1,\kappa}+Y^2+s^2 \nonumber \\[3mm]
\tau_1^{s,\kappa}(\alpha_3^{s,\kappa})&=\alpha_3^{s,\kappa}+Y^2+sY
\nonumber \\
\tau_2^{s,\kappa}(\alpha_3^{s,\kappa})&=\alpha_3^{s,\kappa}+s^2Y^2+s^2
\\
\tau_0^{s,\kappa}(\alpha_3^{s,\kappa})&=\alpha_3^{s,\kappa}+sY^2+sY
\nonumber \\[3mm]
\tau_1^{s^2,\kappa}(\alpha_3^{s^2,\kappa})&=\alpha_3^{s^2,\kappa}+sY^2+s
\nonumber \\
\tau_2^{s^2,\kappa}(\alpha_3^{s^2,\kappa})&=\alpha_3^{s^2,\kappa}+Y^2+s^2Y
\label{taus2} \\
\tau_0^{s^2,\kappa}(\alpha_3^{s^2,\kappa})&=\alpha_3^{s^2,\kappa}+s^2Y^2+s^2Y
\nonumber
\end{alignat}
Hence $L_{\zeta,\kappa}/K$ is a Galois extension.  Let
$0\le i\le2$ and $\zeta\in\F_4^{\times}$.  If
$(i,\zeta)\in\{(0,1),(1,s^2),(2,s)\}$ then
$(\tau_i^{\zeta,\kappa})^2(\alpha_3^{1,\kappa})
=\alpha_3^{\zeta,\kappa}+1$, and hence
$\tau_i^{\zeta,\kappa}$ has order 4.  Thus
$L_{\zeta,\kappa}\not=M_0$, so $L_{\zeta,\kappa}/K$ is
an extension of degree 8.  If
$(i,\zeta)\not\in\{(0,1),(1,s^2),(2,s)\}$ then
$(\tau_i^{\zeta,\kappa})^2(\alpha_3^{1,\kappa})
=\alpha_3^{\zeta,\kappa}$, so $\tau_i^{\zeta,\kappa}$ has
order 2.  It follows that for each
$\zeta\in\F_4^{\times}$ there is exactly one
$0\le i\le2$ such that $\sigmab_i$ lifts to an element
of $\Gal(L_{\zeta,\kappa}/K)$ with order 4.  Therefore
$\Gal(L_{\zeta,\kappa}/K)\cong D_4$.

     Let $L/M_0$ be an extension such that $L/K$ is a
$D_4$-extension.  Let $\tau\in\Gal(L/K)$ have order 4.
Then there exists $0\le i\le2$ such that $\tau$
restricts to $\sigmab_i\in\Gal(M_0/K)$, and
$\zeta\in\F_4^{\times}$ such that
$\tau_i^{\zeta,0}\in\Gal(L_{\zeta,0}/K)$
has order 4.  There is an automorphism of $D_4$ which
fixes every rotation and switches the two cosets in
$D_4/Z(D_4)$ whose elements are reflections.  Hence
there is an isomorphism
$\theta:\Gal(L_{\zeta,0}/K)\ra\Gal(L/K)$ such that
$\theta(\tau_i^{\zeta,0})=\tau$ and
$\sigma|_{M_0}=\theta(\sigma)|_{M_0}$ for all
$\sigma\in\Gal(L/K)$.  It now follows from
Lemma~\ref{perturb} that there exists $\kappa\in K$ such
that $L=L_{\zeta,\kappa}$.
\\[\smallskipamount]
(b)-(c) The arguments used in the proofs of parts (b)
and (c) of Proposition~\ref{Qram} are valid here as
well.
\end{proof}

\begin{theorem} \label{embedD}
\begin{enumerate}[(a)]
\item There are precisely six extensions $L/M_0$ such
that $L/K$ is a totally ramified $D_4$-extension with
lower ramification breaks $1,1,5$ and upper ramification
breaks $1,1,2$, namely $L_{\zeta,\delta}$ with
$\zeta\in\F_4^{\times}$ and $\delta\in\{0,s\}$.
\item For every totally ramified $D_4$-extension $L/K$
with lower ramification breaks $1,1,5$, the object
$(L/K,\pi_K+\M_K^2)$ is $\C_{\F_4}$-isomorphic to
$(L_{\zeta,0}/K,\pi_K+\M_K^2)$ for some
$\zeta\in\F_4^{\times}$.  In particular,
$(L_{\zeta,s}/K,\pi_K+\M_K^2)$ is $\C_{\F_4}$-isomorphic
to $(L_{\zeta,0}/K,\pi_K+\M_K^2)$.
\item The objects $(L_{\zeta,0}/K,\pi_K+\M_K^2)$ with
$\zeta\in\F_4^{\times}$ are pairwise nonisomorphic.
\end{enumerate}
\end{theorem}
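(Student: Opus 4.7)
For part (a), Proposition~\ref{Dram}(a) and (b) immediately show that each of the six fields $L_{\zeta,\delta}$ with $\zeta \in \F_4^{\times}$, $\delta \in \{0,s\}$ is a totally ramified $D_4$-extension of $K$ with the claimed breaks. For the converse direction, I would imitate the proof of Theorem~\ref{embedQ}(a): given any $L/M_0$ making $L/K$ a $D_4$-extension with the stated breaks, Proposition~\ref{Dram}(a) realizes $L = L_{\zeta,\kappa}$, and then I choose $\kappa$ to maximize $v_K(\kappa)$ within its coset modulo $\wp(M_0) \cap K$. I would rule out $v_K(\kappa) < 0$ in three cases: even $v_K(\kappa)$ by an Artin-Schreier shift inside $K$; $v_K(\kappa) = -r$ with $r \geq 3$ odd by Proposition~\ref{Dram}(c), since the resulting upper break $r$ contradicts $u_3 = 2$; and $v_K(\kappa) = -1$ by absorbing the leading term into $\wp(c_1\alpha_1 + c_2\alpha_2)$. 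Reducing the resulting $\kappa \in \OO_K$ modulo $\wp(M_0) \cap \OO_K = \F_2 + \M_K$ leaves coset representatives $\{0,s\}$. To see the six extensions are pairwise distinct, Lemma~\ref{2diff} reduces an equality $L_{\zeta_1,\delta_1} = L_{\zeta_2,\delta_2}$ to
\[
(\zeta_1+\zeta_2)^2 Y^5 + (\zeta_1+\zeta_2)Y^4 + (\delta_1 + \delta_2) \in \wp(M_0),
\]
which is ruled out when $\zeta_1 \neq \zeta_2$ by the odd valuation $v_{M_0} = -5$ (Lemma~\ref{ASval}), and when $\zeta_1 = \zeta_2$ but $\delta_1 \neq \delta_2$ by the impossibility of $s \in \wp(M_0)$ (else $M_0/K$ would contain an unramified subextension).

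For part (b), Proposition~\ref{M0} reduces the question to showing $(L_{\zeta,s}/K,\pi_K+\M_K^2) \cong (L_{\zeta,0}/K,\pi_K+\M_K^2)$ for each $\zeta \in \F_4^{\times}$. For this I would apply Proposition~\ref{Maut} with $r = s\zeta$: the automorphism $\rhob_r$ of $M_0$ extends to an isomorphism $L_{\zeta,s} \to L_{\zeta,0}$ precisely when $g(\zeta\,\rhob_r(Y)) + s \equiv g(\zeta Y) \pmod{\wp(M_0)}$. Writing $\rhob_r(Y) = Y + \epsilon$ with $\epsilon \equiv r\pi_K \pmod{\M_{M_0}^8}$, using $\zeta^3 = 1$ to expand $g(\zeta(Y+\epsilon)) - g(\zeta Y)$ in characteristic $2$, and observing that every cross-term other than $\zeta^2 Y^4 \epsilon$ has strictly positive $M_0$-valuation (and so vanishes modulo $\wp(M_0)$), the surviving contribution reduces to $\zeta^2 r \pmod{\wp(M_0)}$; since $\zeta^2 \cdot s\zeta = s$, the required congruence holds.

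For part (c), suppose for contradiction that $\gamma : L_{\zeta_1,0} \to L_{\zeta_2,0}$ is a $\C_{\F_4}$-isomorphism with $\zeta_1 \neq \zeta_2$. Since $M_0$ is the unique totally ramified $(C_2 \times C_2)$-subextension of either field, $\gamma(M_0) = M_0$, and the residue-field argument of Theorem~\ref{embedQ}(c) forces the coefficient $a_2$ of $\gamma(\pi_K) = \pi_K + a_2\pi_K^2 + a_3\pi_K^3 + \cdots$ to vanish. After composing $\gamma$ with a suitable element of $\Gal(L_{\zeta_2,0}/K)$ to arrange $\gamma(Y) - Y \in \M_{M_0}$, the Artin-Schreier equation $\delta^4 + \delta = \gamma(\pi_K^{-1}) - \pi_K^{-1}$ forces $\gamma(Y) = Y + \epsilon$ with $\epsilon \equiv a_3\pi_K \pmod{\M_{M_0}^5}$. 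Lemma~\ref{2diff} now requires $g(\zeta_1(Y+\epsilon)) - g(\zeta_2 Y) \in \wp(M_0)$. Decomposing as
\[
[g(\zeta_1(Y+\epsilon)) - g(\zeta_1 Y)] + [g(\zeta_1 Y) - g(\zeta_2 Y)],
\]
the first bracket reduces modulo $\wp(M_0)$ to $\zeta_1^2 a_3$ by the identical computation used in (b), while the second bracket equals $\xi^2 Y^5 + \xi Y^4$ with $\xi := \zeta_1 + \zeta_2 \in \F_4^{\times}$. The resulting obstruction $\xi^2 Y^5 + \xi Y^4 + \zeta_1^2 a_3$ has $v_{M_0} = -5$, which is odd and negative, so by Lemma~\ref{ASval} it cannot lie in $\wp(M_0)$ --- contradiction.

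The main obstacle throughout is the valuation bookkeeping in the Artin-Schreier reduction of $g(\zeta(Y+\epsilon)) - g(\zeta Y)$ that drives both (b) and (c): one must verify that every cross-term arising from the characteristic-$2$ expansions of $(Y+\epsilon)^5$, $(Y+\epsilon)^4$, and $(Y+\epsilon)^3$ other than $\zeta^2 Y^4 \epsilon$, together with the sub-leading pieces of $\zeta^2 Y^4 \epsilon$ itself (whose leading behavior $\zeta^2 a \cdot Y^4\pi_K = \zeta^2 a(1 + Y^{-3} + Y^{-6} + \cdots)$ produces only the constant $\zeta^2 a$ outside $\M_{M_0}$), lies in $\M_{M_0} \subseteq \wp(M_0)$. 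Once this verification is in hand, the choice $r = s\zeta$ completes (b) and the parity-of-valuation obstruction closes (c).
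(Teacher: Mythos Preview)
Your proof is correct and, for parts (a) and (b), follows essentially the same route as the paper: the reduction of $\kappa$ to $\{0,s\}$ via maximizing $v_K(\kappa)$, the distinctness argument via Lemma~\ref{2diff}, and the use of $\rhob_r$ with $r=s\zeta$ to pass from $L_{\zeta,s}$ to $L_{\zeta,0}$ all match the paper exactly.

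For part (c) your argument works but is more elaborate than the paper's. You first force $a_2=0$ via the unramified-subextension argument of Theorem~\ref{embedQ}(c), then compose with an element of $\Gal(L_{\zeta_2,0}/K)$ to arrange $\gamma(Y)-Y\in\M_{M_0}$, and finally track the approximation $\epsilon\equiv a_3\pi_K$ through the expansion of $g(\zeta_1(Y+\epsilon))-g(\zeta_1 Y)$ to extract the constant $\zeta_1^2 a_3$. The paper shortcuts all of this by observing directly that $\gamma$ induces a \emph{wild} automorphism of $M_0$: since $c^4=c$ for $c\in\F_4$, the leading coefficient of $\gamma|_{M_0}$ on $\pi_{M_0}$ coincides with that of $\gamma|_K$ on $\pi_K$, hence equals $1$. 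This immediately gives $\gamma(Y)\equiv Y\pmod{\OO_{M_0}}$ with no adjustment needed, whence $\gamma(g(\zeta_1 Y))\equiv\zeta_1^2 Y^5\pmod{\M_{M_0}^{-4}}$; comparing with $g(\zeta_2 Y)\equiv\zeta_2^2 Y^5\pmod{\M_{M_0}^{-4}}$ already produces a difference of valuation $-5$, yielding the same parity obstruction you reach. Your finer bookkeeping (in particular the constant $\zeta_1^2 a_3$) is unnecessary, since nothing at nonnegative valuation can cancel the $Y^5$ term.
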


\begin{proof}
(a) Let $\zeta\in\F_4^{\times}$ and $\delta\in\{0,s\}$.
Then by Proposition~\ref{Dram} $L_{\zeta,\delta}/K$ is a
$D_4$-extension with lower ramification breaks $1,1,5$
and upper ramification breaks 1,1,2.  Now let
$L/M_0$ be an extension such that $L/K$ is a
totally ramified $D_4$-extension with lower ramification
breaks $1,1,5$.  It follows from
Proposition~\ref{Dram}(a) that $L=L_{\zeta,\kappa}$ for
some $\zeta\in\F_4^{\times}$ and $\kappa\in K$.  Assume
without loss of generality that $\kappa$ is chosen to
satisfy (\ref{max}).  As in the proof of
Theorem~\ref{embedQ}(a) we get $\kappa\in\OO_K$.
Since $\{0,s\}$ are coset representatives for
$\OO_K/\wp(\OO_K)$ we can take $\kappa\in\{0,s\}$.
Suppose $\zeta,\zeta'\in\F_4^{\times}$ and
$\delta,\delta'\in\{0,s\}$ satisfy
$L_{\zeta,\delta}=L_{\zeta',\delta'}$.  Then by
Lemma~\ref{2diff} we have $g(\zeta'Y)+\delta'
=g(\zeta Y)+\delta+\wp(\mu)$ for some $\mu\in M_0$.  It
follows that
$(\zeta'-\zeta)Y^5\equiv\wp(\mu)\pmod{\M_{M_0}^{-4}}$,
so $\zeta'=\zeta$.  Hence $\delta'-\delta=\wp(\mu)$, so
$\delta'=\delta$.  We conclude that the six
$D_4$-extensions $L_{\zeta,\delta}/K$ are distinct.
\\[\medskipamount]
(b) By Proposition~\ref{M0} $(L/K,\pi_K+\M_K^2)$ is
$\C_{\F_4}$-isomorphic to $(L'/K,\pi_K+\M_K^2)$ for some
totally ramified $D_4$-extension $L'/K$ such that
$M_0\subset L'$.  Since $L/K$ has lower ramification
breaks 1,1,5, so does $L'/K$.  Hence by (a), $L'$ must
be equal to $L_{\zeta,\delta}$ for some
$\zeta\in\F_4^{\times}$, $\delta\in\{0,s\}$.  If
$\delta=0$ then we are done, so we focus on the case
$\delta=s$.

     Let $r\in\F_4$.  Then by Proposition~\ref{Maut}
there is $\rhob_r\in\Aut_{\F_4}(M_0)$ such that
$\rhob_r(\pi_K)=\pi_K+r\pi_K^3$ and
$\rhob_r(Y)\equiv Y+r\pi_K \pmod{\M_{M_0}^8}$.  Hence
\begin{alignat*}{2}
\rhob_r(Y^5)&\equiv(Y+r\pi_K)^5&&\pmod{\M_{M_0}} \\
&\equiv Y^5+r\pi_KY^4&&\pmod{\M_{M_0}} \\
&\equiv Y^5+r&&\pmod{\M_{M_0}} \\[2mm]
\rhob_r(Y^4)&\equiv(Y+r\pi_K)^4&&\pmod{\M_{M_0}} \\
&\equiv Y^4&&\pmod{\M_{M_0}} \\[2mm]
\rhob_r(Y^3)&\equiv(Y+r\pi_K)^3&&\pmod{\M_{M_0}} \\
&\equiv Y^3&&\pmod{\M_{M_0}}.
\end{alignat*}
Let $\alpha_3^{\zeta,s}$ be a root of
$X^2-X-g(\zeta Y)-s$; then
$L_{\zeta,s}=M_0(\alpha_3^{\zeta,s})$.  It
follows from the congruences above that by setting
$r=\zeta s$ we get $\rhob_r(g(\zeta Y)+s)
=g(\zeta Y)+\epsilon$ for some
$\epsilon\in\M_{M_0}=\wp(\M_{M_0})$.  Hence $\rhob_r$
extends to an isomorphism
$\rho_r:L_{\zeta,s}\ra L_{\zeta,0}$ such that
$\rho_r(\pi_K)=\pi_K+r\pi_K^3$.  Thus
$\rho_r$ induces a wild automorphism of $K$, so
\[\rho_r:(L_{\zeta,s}/K,\pi_K+\M_K^2)\lra
(L_{\zeta,0}/K,\pi_K+\M_K^2)\]
is a $\C_{\F_4}$-isomorphism.  It follows that
$(L/K,\pi_K+\M_K^2)$ is $\C_{\F_4}$-isomorphic to
$(L_{\zeta,0}/K,\pi_K+\M_K^2)$.
\\[\medskipamount]
(c) We will show that $(L_{1,0}/K,\pi_K+\M_K^2)$ and
$(L_{s,0}/K,\pi_K+\M_K^2)$ are not $\C_{\F_4}$-isomorphic;
the other cases are similar.  Suppose
\[\gamma:(L_{1,0}/K,\pi_K+\M_K^2)\lra
(L_{s,0}/K,\pi_K+\M_K^2)\]
is a $\C_{\F_4}$-isomorphism.  Then $\gamma(M_0)=M_0$.
Let $\alpha_3^{1,0}\in L_{1,0}$ be a root of
$X^2-X-g(Y)$, let $\alpha_3^{s,0}\in L_{s,0}$ be a root
of $X^2-X-g(sY)$, and let $\tau_3^{s,0}$ be the
non-identity element of $\Gal(L_{s,0}/M_0)$.  Then
$L_{s,0}=M_0(\gamma(\alpha_3^{1,0}))=M_0(\alpha_3^{s,0})$
and
\[\tau_3^{s,0}(\gamma(\alpha_3^{1,0}))-\gamma(\alpha_3^{1,0})
=\tau_3^{s,0}(\alpha_3^{s,0})-\alpha_3^{s,0}=1.\]
Hence $\gamma(\alpha_3^{1,0})-\alpha_3^{s,0}\in M_0$.
Since $\gamma$ induces a wild automorphism of $M_0$ we
have $\gamma(Y)\equiv Y\pmod{\OO_{M_0}}$, and hence
$\gamma(g(Y))\equiv Y^5\pmod{\M_{M_0}^{-4}}$.  We also
have $g(sY)\equiv s^2Y^5\pmod{\M_{M_0}^{-4}}$, which
gives
\[\gamma(g(Y))-g(sY)\equiv sY^5\pmod{\M_{M_0}^{-4}}.\]  
It follows that
\[\wp(\gamma(\alpha_3^{1,0})-\alpha_3^{s,0})
=\gamma(g(Y))-g(sY)\not\in\wp(M_0),\]
a contradiction.  Therefore our two objects are not
$\C_{\F_4}$-isomorphic.
\end{proof}

     Once again, Proposition~\ref{equiv} allows us to
translate our classification of certain isomorphism
classes of objects in $\C_{\F_4}$ into a statement about
subgroups of the Nottingham group:

\begin{cor} \label{Dgroups}
There are three conjugacy classes of subgroups of
$\N(\F_4)$ which are isomorphic to $D_4$ and have lower
ramification breaks 1,1,5.
\end{cor}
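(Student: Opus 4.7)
The plan is to invoke the dictionary from Proposition~\ref{equiv} to convert the classification of objects given in Theorem~\ref{embedD} into the claimed count of conjugacy classes of subgroups of $\N(\F_4)$. By Proposition~\ref{equiv}, $\C_{\F_4}$-isomorphism classes of objects $(L/K,\pi_K+\M_K^2)\in\C_{K,\pi_K}$ are in one-to-one correspondence with conjugacy classes of finite subgroups of $\N(\F_4)$, where the object $(L/K,\pi_K+\M_K^2)$ maps to the class of $\G=\theta_{\pi_L}(\Gal(L/K))$ for any uniformizer $\pi_L$ of $L$ with $\norm_{L/K}(\pi_L)+\M_K^2=\pi_K+\M_K^2$. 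Since $\theta_{\pi_L}$ is an anti-isomorphism and $D_4$ is isomorphic to its opposite group, we have $\G\cong D_4$ exactly when $\Gal(L/K)\cong D_4$. Moreover, as discussed in Section~3, the ramification filtration on $\G$ pulls back to the ramification filtration on $\Gal(L/K)$, so the lower ramification breaks of $\G$ are $1,1,5$ precisely when those of $L/K$ are.

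The conjugacy classes of interest therefore correspond bijectively to $\C_{\F_4}$-isomorphism classes of objects $(L/K,\pi_K+\M_K^2)\in\C_{K,\pi_K}$ with $L/K$ a $D_4$-extension having lower breaks $1,1,5$. Theorem~\ref{embedD}(b) asserts that every such object is isomorphic to $(L_{\zeta,0}/K,\pi_K+\M_K^2)$ for some $\zeta\in\F_4^{\times}$, and Theorem~\ref{embedD}(c) asserts that the three objects obtained as $\zeta$ ranges over $\F_4^{\times}=\{1,s,s^2\}$ are pairwise non-isomorphic. Combining these two facts yields exactly three isomorphism classes, hence exactly three conjugacy classes of $D_4$-subgroups of $\N(\F_4)$ with lower ramification breaks $1,1,5$.

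There is essentially no obstacle here: the entire content has already been packaged into Theorem~\ref{embedD} and Proposition~\ref{equiv}, and the corollary is purely a bookkeeping step. The substantive work lies behind the scenes, in the Artin--Schreier analysis that trimmed an a priori infinite family of $\kappa\in K$ down to $\{0,s\}$ and in the explicit construction (via Proposition~\ref{Maut}) of a $\C_{\F_4}$-morphism collapsing the $\delta=s$ case to the $\delta=0$ case.
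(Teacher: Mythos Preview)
Your proposal is correct and follows exactly the paper's approach: the corollary is deduced by applying the dictionary of Proposition~\ref{equiv} to the classification in Theorem~\ref{embedD}. Your write-up simply makes explicit the points (preservation of isomorphism type under the anti-isomorphism $\theta_{\pi_L}$, and preservation of ramification breaks) that the paper leaves implicit.
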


\section{Dihedral subgroups of $\N(\F_4)$}

In this section we use the $D_4$-extensions constructed
in Section~\ref{Dext} to give explicit descriptions of
certain subgroups of $\N(\F_4)$ in terms of finite
automata.  These subgroups are representatives for the
three conjugacy classes of minimally ramified
$D_4$-subgroups of $\N(\F_4)$ given by
Corollary~\ref{Dgroups}.

     Let $K$ be a local field of characteristic 2 with
residue field $\F_4$ and let $\pi_K$ be a uniformizer
for $K$.  Let $M_0=K(Y)$ be the
$(C_2\times C_2)$-extension constructed in
Proposition~\ref{Vclass}.  It follows from
Theorem~\ref{embedD} that there are three
$\C_{\F_4}$-isomorphism classes of objects
$(L/K,\pi_K+\M_K^2)$ such that $L/K$ is a $D_4$-extension
with lower ramification breaks 1,1,5.  We can assume
that $M_0\subset L$, and that our three extensions are
generated over $M_0$ by roots
$\alpha_3^{1,0},\alpha_3^{s,0},\alpha_3^{s^2,0}$ of the
polynomials
\begin{align*}
X^2-X-g(Y)&=X^2-X-(Y^5+Y^4+Y^3) \\
X^2-X-g(sY)&=X^2-X-(s^2Y^5+sY^4+Y^3) \\
X^2-X-g(s^2Y)&=X^2-X-(sY^5+s^2Y^4+Y^3).
\end{align*}
We denote these fields by $L_{1,0}=K(\alpha_3^{1,0})$,
$L_{s,0}=K(\alpha_3^{s,0})$, and
$L_{s^2,0}=K(\alpha_3^{s^2,0})$.  In the proof of
Proposition~\ref{Dram}(a) we extended the elements
$\sigmab_1,\sigmab_2,\sigmab_0$ of $\Gal(M_0/K)$ to
elements $\tau_1^{\zeta,0},\tau_2^{\zeta,0},
\tau_3^{\zeta,0}$ of $\Gal(L_{\zeta,0}/K)$ for each
$\zeta\in\F_4^{\times}$.  Viewed as elements of $D_4$,
$\tau_0^{1,0},\tau_2^{s,0},\tau_1^{s^2,0}$ are rotations
of order 4, and $\tau_1^{1,0},\tau_2^{1,0},\tau_1^{s,0},
\tau_0^{s,0},\tau_2^{s^2,0},\tau_0^{s^2,0}$ are
reflections.  Using Lemma~\ref{ASval} we get
$v_{L_{\zeta,0}}(\alpha_3^{\zeta,0})=-5$.  Hence
$t_{\zeta}:=\zeta Y^2(\alpha_3^{\zeta,0})^{-1}$
satisfies
\[v_{L_{\zeta,0}}(t_{\zeta})
=2v_{L_{\zeta,0}}(Y)-v_{L_{\zeta,0}}(\alpha_3^{\zeta,0})
=2(-2)-(-5)=1.\]
Therefore $t_{\zeta}$ is a uniformizer for
$L_{\zeta,0}$.  We have
\[\norm_{L_{\zeta,0}/M_0}(t_{\zeta})
=\zeta^2Y^4g(\zeta Y)^{-1}\equiv Y^{-1}
\pmod{\M_{M_0}^2},\]
and hence
\[\norm_{L_{\zeta,0}/K}(t)\equiv\norm_{M_0/K}(Y^{-1})
\equiv\pi_K\pmod{\M_K^2}.\]
It follows that the group
\[\D_4^{\zeta}
=\{\sigma(t):\sigma\in\Gal(L_{\zeta,0}/K)\}\]
is a representative of the conjugacy class of subgroups
of $\N(\F_4)$ which corresponds under
Proposition~\ref{equiv} to the object
$(L_{\zeta,0}/K,\pi_K+\M_K^2)\in\C_{K,\pi_K}$.

     By applying $\tau_1^{\zeta,0},\tau_2^{\zeta,0}$ to
the equation $\alpha_3^{\zeta,0}t_{\zeta}=\zeta Y^2$,
and using (\ref{tau1})--(\ref{taus2}), we get
\begin{align*}
(\alpha_3^{1,0}+s^2Y^2+Y)\tau_1^{1,0}(t_1)&=Y^2+s^2 \\
(\alpha_3^{1,0}+sY^2+Y)\tau_2^{1,0}(t_1)&=Y^2+s \\
(\alpha_3^{s,0}+Y^2+sY)\tau_1^{s,0}(t_s)&=sY^2+1 \\
(\alpha_3^{s,0}+s^2Y^2+s^2)\tau_2^{s,0}(t_s)&=sY^2+s^2 \\
(\alpha_3^{s^2,0}+sY^2+s)\tau_1^{s^2,0}(t_{s^2})&=s^2Y^2+s \\
(\alpha_3^{s^2,0}+Y^2+s^2Y)\tau_2^{s^2,0}(t_{s^2})&=s^2Y^2+1.
\end{align*}
As in Section~\ref{Qsub} we use Gr\"obner bases to
compute irreducible polynomials
$f_{\tau_i^{\zeta,0}}(t,X)$ which have
$X=\tau_i^{\zeta,0}(t)$ as a root:
\begin{align*}
f_{\tau_1^{1,0}}(t,X)&=(t+s^2)X^3 + (t^2+s^2t)X^2 +
(t^3+s^2t^2+s^2t+1)X+(s^2t^3+t) \\
f_{\tau_2^{1,0}}(t,X)&=(t+s)X^3 + (t^2+st)X^2 +
(t^3+st^2+st+1)X+(st^3+t) \\
f_{\tau_1^{s,0}}(t,X)
&=(t+1)X^3+(t^2+t)X^2+(t^3+t^2+s^2t+1)x+(t^3+t) \\
f_{\tau_2^{s,0}}(t,X)
&=(st^2+t+s^2)X^3+(t^3+s^2t)X^2
+(s^2t^3+st^2+s^2t+s)X+(st^3+st) \\
f_{\tau_1^{s^2,0}}(t,X)
&=(s^2t^2+t+s)X^3+(t^3+st)X^2
+(st^3+s^2t^2+st+s^2)X+(s^2t^3+s^2t) \\
f_{\tau_2^{s^2,0}}(t,X)
&=(t+1)X^3+(t^2+t)X^2+(t^3+t^2+st+1)x+(t^3+t).
\end{align*}

     Each of these polynomials is nonsingular, so we can
use the algorithm from Section~3.2.3 of \cite{BCT} to
compute the corresponding automata.  Since
$f_{\tau_1^{1,0}}(t,X)$ and $f_{\tau_2^{1,0}}(t,X)$ are
$\Gal(\F_4/\F_2)$ conjugates of each other, the automata
for $\tau_1^{1,0}(t)$ and $\tau_2^{1,0}(t)$ have the
same digraph, with the same edge labels and conjugate
vertex labels.  These automata are given in
Appendix~\ref{3table}.  Since $f_{\tau_1^{s,0}}(t,X),
f_{\tau_2^{s^2,0}}(t,X)$ are $\Gal(\F_4/\F_2)$
conjugates, the automata for
$\tau_1^{s,0}(t),\tau_2^{s^2,0}(t)$ have the same
digraph, the same edge labels, and conjugate vertex
labels.  These are given in Appendix~\ref{reflecttable}.
Similarly, $f_{\tau_2^{s,0}}(t,X),f_{\tau_1^{s^2,0}}(t,X)$
are $\Gal(\F_4/\F_2)$ conjugates, so the automata for
$\tau_2^{s,0}(t),\tau_1^{s^2,0}(t)$ have the same
digraph, the same edge labels, and conjugate vertex
labels, which we give in Appendix~\ref{rottable}.
All six of these automata have 104 states.
Note that for each $\zeta\in\F_4^{\times}$,
$\{\tau_1^{\zeta,0}(t),\tau_2^{\zeta,0}(t)\}$ is a
generating set for the group $\D_4^{\zeta}$ associated
to the extension $L_{\zeta,0}/K$.

\begin{remark}
There are $\F_4$-isomorphisms between $L_{1,0}$,
$L_{s,0}$, and $L_{s^2,0}$ which map $K$ onto $K$.
Therefore the three $D_4$-subgroups of $\N(\F_4)$
constructed here are all conjugate in $\A(\F_4)$.
\end{remark}

\appendix

\section{Automata for generators of $\QQ_8^s$}

\subsection{Automaton for $\sigma_1^3$}
\label{sig2p}
\vspace{1mm}
\[\begin{array}{|c||c|c|c|c||c|||c||c|c|c|c||c|}
\hline
\text{State}&0&1&2&3&\sigma_1^3\text{ label}
&\text{State}&0&1&2&3&\sigma_1^3\text{ label} \\
\hline
 1& 2& 3& 4& 5& 0&19& 7&25&29& 5&s^2 \\ \hline
 2& 6& 7& 8& 9& 0&20&25&14&13&15& s \\ \hline
 3&10&11&12&11& 1&21&14&12&18&12& 1 \\ \hline
 4&11& 3&10&13&s^2&22& 7&10&29&10&s^2 \\ \hline
 5& 8&14& 9&15& 0&23&23&17& 5&28& 0 \\ \hline
 6&16&17&18&18& 0&24&12& 8&11&30& s \\ \hline
 7&19&20&21&22&s^2&25&20&31&22&32& s \\ \hline
 8& 8&23& 9&24& 0&26&11&12&10&12&s^2 \\ \hline
 9&17&25&15& 5& 0&27&12&10&11&10& s \\ \hline
10& 3&26&24&18& 1&28&10&23&12&24& 1 \\ \hline
11&26&27&28&29&s^2&29&31&31&32&32& 1 \\ \hline
12&27& 3&30&13& s&30&11&17&10&28&s^2 \\ \hline
13&19&19&21&21&s^2&31&14& 7&18& 9& 1 \\ \hline
14&31&19&32&21& 1&32&25&11&13&11& s \\ \hline
15&23& 7& 5& 9& 0&33&34&11&35&11& 0 \\ \hline
16&33&14& 7&22& 0&34&36&23&12&21& 0 \\ \hline
17&17& 8&15&30& 0&35&17&26&15&18& 0 \\ \hline
18&20&20&22&22& s&36& 2& 3& 4& 5& 0 \\ \hline
\end{array}\]

\subsection{Automaton for $\sigma_0^3$}
\label{sig3p}
\footnotesize
\vspace{4mm}
\[\begin{array}{|c||c|c|c|c||c|||c||c|c|c|c||c|}
\hline
\text{State}&0&1&2&3&\sigma_0\text{ label}
&\text{State}&0&1&2&3&\sigma_0^3\text{ label} \\
\hline
1& 2& 3& 4& 5& 0&49&72&81&74&82& 1 \\ \hline
2& 6& 7& 8& 9& 0&50&17&83&16&84& 1 \\ \hline
3&10&11&12&13& 1&51& 7&10&17&12& 0 \\ \hline
4&10&14&12&15& 1&52&11& 7&84&16& s \\ \hline
5& 7&16&17& 7& 0&53&85&85&77&74&s^2 \\ \hline
6&18&19&20&21& 0&54&85& 9&77&17&s^2 \\ \hline
7&22&23&24&25& 0&55&81&70&82&77& s \\ \hline
8&26&27&28&29& 1&56&75&73&80&71& s \\ \hline
9&23&24&25&30& s&57&75&10&80&12& s \\ \hline
 10&26&21&28&31& 1&58&14&83&76&84&s^2 \\ \hline
 11&29&27&32&29& s&59&79&78&15&86& 0 \\ \hline
 12&33&33&34&28&s^2&60&83&73&86&71&s^2 \\ \hline
 13&35&21&36&31&s^2&61& 9&85& 9&74& s \\ \hline
 14&27&37&29&34&s^2&62& 9&79& 9&80& s \\ \hline
 15&19&29&21&38& 1&63&14&72&76& 8&s^2 \\ \hline
 16&24&22&38&21&s^2&64&10&17&12& 9& 1 \\ \hline
 17&39&39&40&36& 1&65&83&16&86& 7&s^2 \\ \hline
 18&41&42&43&44& 0&66&81&11&82&13& s \\ \hline
 19&45&46&47&48& 1&67&73&17&71& 9& 0 \\ \hline
 20&49&45&44&50& 1&68&78& 9&13&17& 1 \\ \hline
 21& 3&51& 5&52& 1&69&87&26&88&34& 0 \\ \hline
 22&51&53&52&54& 0&70&37&29&89&38& 0 \\ \hline
 23&55&56&49&57& s&71&21&37&31&34& 1 \\ \hline
 24&58&45&42&50&s^2&72&21&19&31&90& 1 \\ \hline
 25&51& 3&52& 5& 0&73&91&92&93&40& 0 \\ \hline
 26&49&59&44&60& 1&74&27&92&29&40&s^2 \\ \hline
 27&42&49&60&61&s^2&75&92&35&94&94& s \\ \hline
 28&45&56&47&57& 1&76&91&19&93&90& 0 \\ \hline
 29&56&62&50&63& s&77&90&90&30&32& 0 \\ \hline
 30&53&52&54&64&s^2&78&19&26&21&89& 1 \\ \hline
 31&49&65&44&66& 1&79&90&95&30&24& 0 \\ \hline
 32&46&67&66&47& 0&80&29&91&32&93& s \\ \hline
 33&53& 3&54& 5&s^2&81&95&33&90&28& s \\ \hline
 34&62&59&48&60& s&82&92&26&94&89& s \\ \hline
 35&65&68&63&42&s^2&83&33&90&34&32&s^2 \\ \hline
 36&52&53&64&54& s&84&95&95&90&24& s \\ \hline
 37&46&58&66&49& 0&85&35&91&36&93&s^2 \\ \hline
 38&55&68&49&42& s&86&37&35&89&94& 0 \\ \hline
 39&68&67&59&47& 1&87& 2&53& 4&54& 0 \\ \hline
 40&68&49&59&61& 1&88& 3&52& 5&64& 1 \\ \hline
 41&69&70&71& 8& 0&89&42&58&60&49&s^2 \\ \hline
 42&16&14& 7&15&s^2&90&59&42&61&44& 0 \\ \hline
 43&72&73&74&71& 1&91&67&65&57&66& 0 \\ \hline
 44&70&75& 8&76& 0&92&52&52&64&64& s \\ \hline
 45&17&75&16&76& 1&93&65&46&63&48&s^2 \\ \hline
 46&70&72& 8& 8& 0&94&67&55&57&59& 0 \\ \hline
 47&73&70&71&77& 0&95&62&55&48&59& s \\ \hline
 48&78&79&13&80& 1& & & & & &  \\ \hline
\end{array}\]

\section{Automata for generators of $\D_4^{\zeta}$}

\subsection{Automata for $\tau_1^{1,0},\tau_2^{1,0}$}
\label{3table}
\vspace{4mm}
\scriptsize
\[\begin{array}{|c||c|c|c|c||c|c|||c||c|c|c|c||c|c|}
\hline
\text{State}&0&1&2&3
&\tau_1^{1,0}\text{ label}&\tau_2^{1,0}\text{ label}
&\text{State}&0&1&2&3&\tau_1^{1,0}\text{ label}
&\tau_2^{1,0}\text{ label} \\ \hline
1&2&3&4&5&0&0&53&16&75&18&26&s&s^2 \\ \hline
2&6&7&8&9&0&0&54&7&8&9&47&0&0 \\ \hline
3&3&10&11&11&1&1&55&69&53&76&55&s^2&s \\ \hline
4&12&13&14&15&s^2&s&56&12&77&14&74&s^2&s \\ \hline
5&16&17&18&19&s&s^2&57&66&31&78&33&s&s^2 \\ \hline
6&6&20&8&8&0&0&58&21&16&59&33&0&0 \\ \hline
7&21&7&21&22&0&0&59&21&11&59&38&0&0 \\ \hline
8&23&24&25&26&1&1&60&10&21&30&21&0&0 \\ \hline
9&12&27&28&29&s^2&s&61&4&27&79&29&s^2&s \\ \hline
10&21&10&21&30&0&0&62&16&80&71&15&s&s^2 \\ \hline
11&16&31&32&33&s&s^2&63&20&4&81&34&0&0 \\ \hline
12&12&22&14&34&s^2&s&64&82&62&83&64&1&1 \\ \hline
13&12&35&15&21&s^2&s&65&23&84&52&38&1&1 \\ \hline
14&4&36&34&22&s^2&s&66&16&85&32&55&s&s^2 \\ \hline
15&4&13&37&15&s^2&s&67&82&24&86&26&1&1 \\ \hline
16&16&30&32&38&s&s^2&68&16&87&33&22&s&s^2 \\ \hline
17&23&39&40&15&1&1&69&12&88&14&43&s^2&s \\ \hline
18&11&41&42&43&s&s^2&70&89&49&19&46&1&1 \\ \hline
19&44&17&45&19&s&s^2&71&11&17&90&19&s&s^2 \\ \hline
20&21&20&21&46&0&0&72&12&91&61&33&s^2&s \\ \hline
21&21&21&21&21&0&0&73&20&11&92&38&0&0 \\ \hline
22&12&36&15&22&s^2&s&74&89&72&93&74&1&1 \\ \hline
23&23&46&25&47&1&1&75&23&94&40&34&1&1 \\ \hline
24&23&48&26&21&1&1&76&89&24&95&26&1&1 \\ \hline
25&8&49&47&46&1&1&77&12&96&15&30&s^2&s \\ \hline
26&8&24&50&26&1&1&78&82&49&29&46&1&1 \\ \hline
27&23&51&52&33&1&1&79&7&11&94&38&0&0 \\ \hline
28&4&53&54&55&s^2&s&80&12&92&28&47&s^2&s \\ \hline
29&56&27&57&29&s^2&s&81&23&72&40&74&1&1 \\ \hline
30&16&58&33&30&s&s^2&82&23&97&25&19&1&1 \\ \hline
31&16&59&33&21&s&s^2&83&56&13&98&15&s^2&s \\ \hline
32&11&58&38&30&s&s^2&84&16&41&71&43&s&s^2 \\ \hline
33&11&31&60&33&s&s^2&85&16&99&33&46&s&s^2 \\ \hline
34&7&4&4&34&0&0&86&56&36&43&22&s^2&s \\ \hline
35&21&4&35&34&0&0&87&21&44&59&55&0&0 \\ \hline
36&21&12&35&15&0&0&88&12&100&15&46&s^2&s \\ \hline
37&7&21&22&21&0&0&89&23&101&25&29&1&1 \\ \hline
38&10&11&11&38&0&0&90&10&4&84&34&0&0 \\ \hline
39&12&5&61&38&s^2&s&91&16&81&18&47&s&s^2 \\ \hline
40&8&62&63&64&1&1&92&23&62&52&64&1&1 \\ \hline
41&12&65&28&26&s^2&s&93&44&31&102&33&s&s^2 \\ \hline
42&10&8&5&47&0&0&94&12&53&61&55&s^2&s \\ \hline
43&66&41&67&43&s&s^2&95&44&58&55&30&s&s^2 \\ \hline
44&16&68&32&64&s&s^2&96&21&56&35&43&0&0 \\ \hline
45&69&13&70&15&s^2&s&97&23&103&26&22&1&1 \\ \hline
46&23&49&26&46&1&1&98&66&58&64&30&s&s^2 \\ \hline
47&20&8&8&47&0&0&99&21&66&59&64&0&0 \\ \hline
48&21&8&48&47&0&0&100&21&69&35&74&0&0 \\ \hline
49&21&23&48&26&0&0&101&23&104&26&30&1&1 \\ \hline
50&20&21&46&21&0&0&102&69&36&74&22&s^2&s \\ \hline
51&16&9&71&34&s&s^2&103&21&82&48&29&0&0 \\ \hline
52&8&72&73&74&1&1&104&21&89&48&19&0&0 \\ \hline
\end{array}\]

\subsection{Automata for $\tau_1^{s,0},\tau_2^{s^2,0}$}
\label{reflecttable}
\vspace{4mm}
\scriptsize
\[\begin{array}{|c||c|c|c|c||c|c|||c||c|c|c|c||c|c|}
\hline
\text{State}&0&1&2&3
&\tau_1^{s,0}\text{ label}&\tau_2^{s^2,0}\text{ label}
&\text{State}&0&1&2&3&\tau_1^{s,0}\text{ label}
&\tau_2^{s^2,0}\text{ label} \\
\hline
 1&2&3&4&5&0&0&53&52&59&68&43&1&1 \\ \hline
 2&6&7&8&9&0&0&54&19&15&64&43&0&0 \\ \hline
 3&3&10&8&4&1&1&55&8&69&70&71&s&s^2 \\ \hline
 4&11&12&13&14&s^2&s&56&11&72&73&24&s^2&s \\ \hline
 5&15&16&17&18&s&s^2&57&7&39&74&30&0&0 \\ \hline
 6&6&10&8&4&0&0&58&75&56&76&58&s^2&s \\ \hline
 7&19&20&19&21&0&0&59&15&64&43&19&s&s^2 \\ \hline
 8&15&22&23&24&s&s^2&60&11&77&24&21&s^2&s \\ \hline
 9&25&26&27&28&1&1&61&11&78&13&28&s^2&s \\ \hline
10&19&7&19&29&0&0&62&79&45&18&29&1&1 \\ \hline
11&11&29&13&30&s^2&s&63&20&19&21&19&0&0 \\ \hline
12&25&31&14&19&1&1&64&19&39&31&30&0&0 \\ \hline
13&8&32&33&21&s&s^2&65&25&80&38&36&1&1 \\ \hline
14&4&12&34&14&s^2&s&66&79&59&81&43&1&1 \\ \hline
15&15&35&23&36&s&s^2&67&15&82&43&35&s&s^2 \\ \hline
16&25&37&38&24&1&1&68&52&45&28&29&1&1 \\ \hline
17&39&26&40&28&1&1&69&15&83&84&43&s&s^2 \\ \hline
18&41&16&42&18&s&s^2&70&10&4&80&36&0&0 \\ \hline
19&19&19&19&19&0&0&71&85&69&86&71&s&s^2 \\ \hline
20&19&10&19&35&0&0&72&11&87&73&36&s^2&s \\ \hline
21&15&32&43&21&s&s^2&73&8&16&88&18&s&s^2 \\ \hline
22&11&44&24&19&s^2&s&74&25&89&38&90&1&1 \\ \hline
23&39&45&30&29&1&1&75&11&91&13&18&s^2&s \\ \hline
24&8&22&46&24&s&s^2&76&75&12&92&14&s^2&s \\ \hline
25&25&21&47&33&1&1&77&19&41&64&51&0&0 \\ \hline
26&25&48&27&14&1&1&78&25&93&14&35&1&1 \\ \hline
27&4&49&50&51&s^2&s&79&25&94&47&71&1&1 \\ \hline
28&52&26&53&28&1&1&80&11&49&55&51&s^2&s \\ \hline
29&25&45&14&29&1&1&81&41&32&51&21&s&s^2 \\ \hline
30&20&39&8&30&0&0&82&19&52&31&28&0&0 \\ \hline
31&19&4&44&36&0&0&83&15&95&84&33&s&s^2 \\ \hline
32&19&25&31&14&0&0&84&39&89&96&90&1&1 \\ \hline
33&10&8&4&33&0&0&85&15&97&23&90&s&s^2 \\ \hline
34&7&19&29&19&0&0&86&85&22&98&24&s&s^2 \\ \hline
35&11&54&24&35&s^2&s&87&11&56&73&58&s^2&s \\ \hline
36&7&4&39&36&0&0&88&10&39&87&30&0&0 \\ \hline
37&11&5&55&33&s^2&s&89&11&99&55&43&s^2&s \\ \hline
38&4&56&57&58&s^2&s&90&79&89&100&90&1&1 \\ \hline
39&25&59&47&43&1&1&91&25&101&14&21&1&1 \\ \hline
40&20&8&5&33&0&0&92&75&54&58&35&s^2&s \\ \hline
41&15&60&23&58&s&s^2&93&19&61&44&90&0&0 \\ \hline
42&61&12&62&14&s^2&s&94&15&102&43&29&s&s^2 \\ \hline
43&39&59&63&43&1&1&95&15&69&84&71&s&s^2 \\ \hline
44&19&8&64&33&0&0&96&20&4&95&36&0&0 \\ \hline
45&19&11&44&24&0&0&97&11&103&24&29&s^2&s \\ \hline
46&10&19&35&19&0&0&98&85&32&71&21&s&s^2 \\ \hline
47&4&54&36&35&s^2&s&99&15&74&17&30&s&s^2 \\ \hline
48&25&9&27&30&1&1&100&41&22&104&24&s&s^2 \\ \hline
49&15&65&17&14&s&s^2&101&19&75&44&58&0&0 \\ \hline
50&7&8&9&33&0&0&102&19&79&31&18&0&0 \\ \hline
51&61&49&66&51&s^2&s&103&19&85&64&71&0&0 \\ \hline
52&25&67&47&51&1&1&104&61&54&90&35&s^2&s \\ \hline
\end{array}\]

\subsection{Automata for $\tau_2^{s,0},\tau_1^{s^2,0}$}
\label{rottable}
\vspace{4mm}
\scriptsize
\[\begin{array}{|c||c|c|c|c||c|c|||c||c|c|c|c||c|c|}
\hline
\text{State}&0&1&2&3&\tau_2^{s,0}\text{ label}
&\tau_1^{s^2,0}\text{ label}&\text{State}&0&1&2&3
&\tau_2^{s,0}\text{ label}&\tau_1^{s^2,0}\text{ label} \\
\hline
 1&2&3&4&5&0&0&53&20&46&32&48&1&1 \\ \hline
 2&6&7&8&9&0&0&54&13&16&65&38&s&s^2 \\ \hline
 3&3&10&11&12&1&1&55&16&68&18&44&s^2&s \\ \hline
 4&13&14&13&15&s&s^2&56&7&8&9&32&1&1 \\ \hline
 5&16&17&18&19&s^2&s&57&7&67&25&13&1&1 \\ \hline
 6&6&20&8&21&0&0&58&4&13&48&13&s&s^2 \\ \hline
 7&7&15&22&23&1&1&59&13&10&70&77&s&s^2 \\ \hline
 8&7&24&9&25&1&1&60&29&11&78&28&0&0 \\ \hline
 9&26&11&27&28&0&0&61&26&50&79&19&0&0 \\ \hline
10&29&30&31&32&0&0&62&14&55&80&23&s&s^2 \\ \hline
11&16&33&34&25&s^2&s&63&16&81&38&48&s^2&s \\ \hline
12&35&36&37&38&1&1&64&82&83&84&64&s^2&s \\ \hline
13&13&13&13&13&s&s^2&65&13&45&67&47&s&s^2 \\ \hline
14&13&39&13&40&s&s^2&66&14&13&15&13&s&s^2 \\ \hline
15&16&41&38&15&s^2&s&67&13&26&70&42&s&s^2 \\ \hline
16&16&40&18&42&s^2&s&68&29&70&44&13&0&0 \\ \hline
17&29&43&44&15&0&0&69&39&13&40&13&s&s^2 \\ \hline
18&45&46&47&48&1&1&70&13&55&65&23&s&s^2 \\ \hline
19&49&50&51&19&0&0&71&7&85&25&15&1&1 \\ \hline
20&7&52&22&28&1&1&72&5&86&87&72&s^2&s \\ \hline
21&20&36&53&38&1&1&73&29&88&74&42&0&0 \\ \hline
22&26&54&42&40&0&0&74&55&86&89&72&s^2&s \\ \hline
23&39&55&26&23&s&s^2&75&49&54&19&40&0&0 \\ \hline
24&7&56&9&47&1&1&76&13&20&67&32&s&s^2 \\ \hline
25&26&57&58&25&0&0&77&35&90&91&77&1&1 \\ \hline
26&29&57&31&25&0&0&78&55&83&92&64&s^2&s \\ \hline
27&4&55&56&23&s&s^2&79&4&45&93&47&s&s^2 \\ \hline
28&10&11&12&28&0&0&80&16&86&34&72&s^2&s \\ \hline
29&29&48&31&47&0&0&81&13&35&67&72&s&s^2 \\ \hline
30&7&59&25&40&1&1&82&16&94&18&77&s^2&s \\ \hline
31&55&41&23&15&s^2&s&83&16&95&96&38&s^2&s \\ \hline
32&20&8&21&32&1&1&84&82&68&97&44&s^2&s \\ \hline
33&7&60&61&42&1&1&85&13&49&70&19&s&s^2 \\ \hline
34&45&8&62&32&1&1&86&7&98&61&44&1&1 \\ \hline
35&7&63&22&64&1&1&87&10&57&99&25&0&0 \\ \hline
36&16&65&38&13&s^2&s&88&29&50&74&19&0&0 \\ \hline
37&5&41&28&15&s^2&s&89&39&45&88&47&s&s^2 \\ \hline
38&45&36&66&38&1&1&90&29&100&78&38&0&0 \\ \hline
39&13&4&13&48&s&s^2&91&5&68&101&44&s^2&s \\ \hline
40&29&54&44&40&0&0&92&39&26&60&42&s&s^2 \\ \hline
41&13&7&67&25&s&s^2&93&7&90&61&77&1&1 \\ \hline
42&4&26&45&42&s&s^2&94&29&102&44&48&0&0 \\ \hline
43&13&5&65&28&s&s^2&95&16&103&96&23&s^2&s \\ \hline
44&55&68&69&44&s^2&s&96&45&90&104&77&1&1 \\ \hline
45&7&36&22&38&1&1&97&82&41&64&15&s^2&s \\ \hline
46&13&29&70&44&s&s^2&98&29&80&78&23&0&0 \\ \hline
47&14&45&55&47&s&s^2&99&35&46&72&48&1&1 \\ \hline
48&7&46&25&48&1&1&100&16&93&34&47&s^2&s \\ \hline
49&29&71&31&72&0&0&101&10&54&77&40&0&0 \\ \hline
50&29&73&74&44&0&0&102&13&82&65&64&s&s^2 \\ \hline
51&49&57&75&25&0&0&103&16&83&96&64&s^2&s \\ \hline
52&16&76&38&40&s^2&s&104&14&26&103&42&s&s^2 \\ \hline
\end{array}\]

\end{document}